\DeclareMathAlphabet{\pazocal}{OMS}{zplm}{m}{n}
\let\oldReturn\Return
\renewcommand{\Return}{\State\oldReturn}
\pgfplotsset{compat=1.5}
\newcommand{\B}{\pazocal{B}}
\newcommand{\C}{\pazocal{C}}
\newcommand{\K}{\pazocal{K}}
\newcommand{\D}{\pazocal{D}}
\newcommand{\E}{\pazocal{E}}
\newcommand{\U}{\pazocal{U}}
\renewcommand{\P}{\pazocal{P}}
\renewcommand{\S}{\pazocal{S}}
\newcommand{\x}{{X}}
\renewcommand{\u}{{U}}
\newcommand{\remove}[1]{}
\def \cN{{\mathcal N}}
\def \cB{{\mathcal B}}
\def \cC{{\cal C}}
\def \cS{{\cal S}}
\def \cU{{\cal U}}
\def \cY{{\cal Y}}
\def \*{\star}
\def \10n{\!\!\!\!\!\!\!\!\!\!}
\newcommand{\bK}{\bar{K}}
\newcommand{\R}{\mathbb{R}}
\newcommand{\bA}{\bar{A}}
\newcommand{\bB}{\bar{B}}
\newcommand{\bC}{\bar{C}}
\newtheorem{theorem}{Theorem}
\newtheorem{lem}{Lemma}
\newtheorem{defn}{Definition}
\newtheorem{rem}{Remark}
\newtheorem{prob}{Problem}
\newtheorem{assume}{Assumption}
\newtheorem{prop}{Proposition}
\title{\LARGE \bf Minimum Cost Feedback Selection for Arbitrary Pole Placement in Structured Systems}
\author{Shana~Moothedath,
        Prasanna~Chaporkar
        and~Madhu~N.~Belur
\thanks{The authors are in the Department of Electrical Engineering, Indian Institute of Technology Bombay, India. Email: $\lbrace$shana, chaporkar, belur$\rbrace$@ee.iitb.ac.in.}}
\begin{document}
\maketitle
\thispagestyle{empty}
\pagestyle{empty}

\begin{abstract}
This paper addresses optimal feedback selection for arbitrary pole placement of structured systems when each feedback edge is associated with a cost. Given a structured system and a feedback cost matrix, our aim is to find a feasible feedback matrix of minimum cost that guarantees arbitrary pole placement of the closed-loop structured system. We first give a polynomial time reduction of the weighted set cover problem to an instance of the feedback selection problem and thereby show that the feedback selection problem is NP-hard. Then we prove the inapproximability of the problem by showing that constant factor approximation for the problem does not exist unless the set cover problem can be approximated within a constant factor. Since the problem is hard, we study a subclass of systems whose directed acyclic graph constructed using the strongly connected components of the state digraph is a line graph and the state bipartite graph has a perfect matching. We propose a polynomial time optimal algorithm based on dynamic programming for optimal feedback selection on this class of systems. Further, over the same class of systems we relax the perfect matching assumption, and provide a polynomial time 2-optimal solution based on dynamic programming and a minimum cost perfect matching algorithm.
\end{abstract}
\begin{IEEEkeywords}
Linear structured systems, Arbitrary pole placement, Linear output feedback, Minimum cost control selection.
\end{IEEEkeywords}
\section{Introduction}\label{sec:intro}
Consider structured matrices $\bA, \bB$ and $\bC$ whose entries are $\*$'s and 0's that represent an equivalence class of control systems whose system dynamics is governed by $\dot{x} = Ax+Bu$, $y=Cx$, where $A \in \R^{n \times n}$, $B \in \R^{n \times m}$ and $C \in \R^{p \times n}$ has the same structure as that of $\bA, \bB$ and $\bC$ respectively. Here, $\R$ denotes the set of real numbers. More precisely, 
\begin{eqnarray}\label{eq:struc}
A_{ij} &=& 0 \mbox{~whenever~} \bA_{ij} = 0,\mbox{~and} \nonumber \\
B_{ij} &=& 0 \mbox{~whenever~} \bB_{ij} = 0,\mbox{~and} \nonumber \\
C_{ij} &=& 0 \mbox{~whenever~} \bC_{ij} = 0.
\end{eqnarray}
Any triple $(A, B, C)$ that satisfies \eqref{eq:struc} is referred as a {\it numerical realization} of the {\it structured system} $(\bA, \bB, \bC)$. Let $P \in \R^{m \times p}$ denote the feedback cost matrix, where $P_{ij}$ is the cost for feeding the $j^{\rm th}$ output to the $i^{\rm th}$ input. It may not be feasible to connect all the outputs to all the inputs. We model this by considering cost of such forbidden connection to be infinite. To denote the feedback connections made, we use matrix $\bK \in \{0, \*\}^{m \times p}$. Now the cost of $\bK$ denoted by $P(\bK)$ is given by $P(\bK) = \sum_{(i,j):\bK_{ij} = \*}P_{ij}$.  For a given $\bK$ we define, $[K]:=\{K: K_{ij} = 0, \mbox{~if~} \bK_{ij} = 0\}$. 

\begin{defn}\label{def:struc}
The structured system $(\bA, \bB, \bC)$ and the feedback matrix $\bK$ is said not to have structurally fixed modes (SFMs) if there exists a numerical realization $(A, B, C)$ of $(\bA, \bB, \bC)$
 such that $\cap_{K \in [K]} \sigma(A + BKC) = \phi$, where $\sigma(T)$ denotes the set of eigenvalues of any square matrix $T$. 
 \end{defn}
In this paper our aim is to find a minimum cost $\bK$ such that the closed-loop system denoted as $(\bA, \bB, \bC, \bK)$ has no SFMs.
Specifically, we wish to solve the following optimization problem: Given $(\bA, \bB, \bC)$ and a feedback cost matrix $P$, let $\K_s := \{\bK \in \{ 0, \*\}^{m \times p}:(\bA,\bB,\bC,\bK) \mbox{ has no SFMs} \}$. Note that $\K_s$ consists of all possible feedback structured matrices $\bK$ such that the closed-loop system $(\bA, \bB, \bC, \bK)$ has no SFMs. 
\begin{prob}\label{prob:one}
Given a structured system $(\bA, \bB, \bC)$ and feedback cost matrix $P$, find
\[ \bK^\* ~\in~ \arg\min_{\10n \bK \in \K_s} P(\bK). \] 
\end{prob}
We refer to Problem~\ref{prob:one} as {\it the minimum cost feedback selection problem for arbitrary pole placement}. Let $p^\* = P(\bK^\*)$ denote the optimal cost of Problem~\ref{prob:one}. For a structured system $(\bA, \bB, \bC)$ with feedback cost matrix $P$, without loss of generality, we assume that $\K_s$ is non-empty. Specifically, $\bK^f \in \K_s$, where $\bK^f_{ij} = \*$ for all $i,j$. Notice that if $\K_s$ is empty, then for every $\bK$ the closed-loop structured system $(\bA, \bB, \bC, \bK)$ has SFMs. However, if $p^\* = +\infty$, then we say that arbitrary pole placement is not possible for that structured system.

It is shown that there does not exist polynomial time algorithm for solving the above problem unless P = NP \cite{CarPeqAguKarPap:15}. In this paper, we prove the hardness of the above problem using the set cover problem. In addition we also show the constant factor inapproximability of the problem. Specifically, we have the following result as one of our main result (see Section~\ref{sec:hard}).

\begin{theorem}\label{th:inapprox}
Consider a structured system $(\bA, \bB, \bC)$ and feedback cost matrix $P$. Then, there does not exist any polynomial time algorithm for solving Problem~\ref{prob:one} that has approximation ratio $b\,{\rm log}\,n$ for $0< b < 1/4$, where $n$ denotes the number of states in the system.
\end{theorem}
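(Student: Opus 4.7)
The plan is to combine a gap-preserving polynomial-time reduction from weighted set cover to Problem~\ref{prob:one} with the known logarithmic-factor inapproximability of set cover. Since Section~\ref{sec:hard} already reduces set cover to Problem~\ref{prob:one} to obtain NP-hardness, the natural strategy is to show that this same reduction is approximation preserving and then transfer the hardness across.

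First, I would recall the classical inapproximability of weighted set cover: there is a constant $c \ge 1/4$ such that, unless $P = NP$, no polynomial-time algorithm approximates weighted set cover within factor $c \log N$, where $N$ is the ground-set size. (Stronger $(1-o(1))\ln N$ bounds are known, but only the constant $1/4$ is needed here.) The contrapositive is the target: assuming a polynomial-time algorithm for Problem~\ref{prob:one} with ratio $b \log n$ for some $b < 1/4$, derive via the reduction a set-cover approximation with ratio below $(1/4)\log N$, which is the desired contradiction.

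Second, I would construct the reduction as follows. Given a set cover instance with universe $U=\{u_1,\dots,u_N\}$ and weighted subsets $S_1,\dots,S_M$, build a structured system $(\bA,\bB,\bC)$ with cost matrix $P$ such that: (i) the state digraph of $\bA$ has one nontrivial strongly connected component per universe element $u_i$, with the input/output structure wired so that set $S_j$ can ``reach'' exactly the states corresponding to its elements; (ii) each permissible feedback entry $\bK_{ij}$ corresponds to using set $S_j$ to cover element $u_i$, with $P_{ij}$ equal to the weight of $S_j$ on valid entries and $+\infty$ elsewhere; and (iii) the state bipartite graph of $(\bA,\bB,\bC,\bK)$ admits a perfect matching for every feasible $\bK$, e.g., via self-loops in $\bA$. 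Under this construction, the Sezer--Siljak graph-theoretic characterization of SFMs collapses to the single condition that every SCC of the closed-loop digraph is hit by a feedback edge; by (i)--(ii), this is equivalent to the selected entries inducing a valid cover of $U$ whose total cost equals the cover cost. Hence $p^\* = \mathrm{OPT}_{\mathrm{SC}}$, and the number of states $n$ is linear in $N + M$.

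The main technical obstacle is achieving (i)--(iii) simultaneously while maintaining exact cost equality, i.e., ruling out ``spurious'' cheap feedback selections that eliminate SFMs without corresponding to a proper cover. Care is needed so that the only way a feedback edge can hit a given SCC is through an input/output pair that truly encodes a set containing the associated element, and so that the perfect-matching condition is never the binding constraint over the optimization. Once this structural equivalence is in place, a hypothetical polynomial-time $b \log n$-approximation of Problem~\ref{prob:one} with $b < 1/4$ would, since $n = \mathrm{poly}(N)$, yield a polynomial-time approximation for weighted set cover of factor strictly below $(1/4)\log N$, contradicting the set-cover inapproximability and completing the proof.
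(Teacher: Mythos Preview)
Your strategy---show that the set-cover reduction is approximation preserving and then invoke the $(b\log N)$-inapproximability of set cover---is exactly the paper's proof: Lemma~\ref{lem:epsilon} supplies the approximation transfer, and Proposition~\ref{prop:inapprox} (Lund--Yannakakis) supplies the hardness.

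There is, however, a concrete issue in the reduction you sketch. In (ii) you write that ``$\bK_{ij}$ corresponds to using set $S_j$ to cover element $u_i$'' with $P_{ij}=w(S_j)$. Read literally this forces one feedback edge per covered element, so a feasible $\bK$ pays $w(S_j)$ once for \emph{each element} it covers rather than once per selected set, and the crucial equality $p^\*=\mathrm{OPT}_{\mathrm{SC}}$ fails. The paper's construction (Algorithm~\ref{alg:set}) avoids this by using a \emph{single} input $u_1$ connected to a hub state $x_{N+1}$ that has edges to all element-states $x_1,\dots,x_N$, together with one output $y_j$ per set sensing exactly the states in $S_j$. Then the single feedback edge $(y_j,u_1)$ costs $w(S_j)$ and, via the hub, places every element of $S_j$ in an SCC containing that edge, so selecting a set costs its weight exactly once. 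A side benefit of the single-input design is that $n=N+1$ rather than $n=\Theta(N+M)$, so $\log n$ and $\log N$ agree directly and you do not need to restrict to hard instances with $M=\mathrm{poly}(N)$.
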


Though Problem~\ref{prob:one} is NP-hard and even approximating it within a multiplicative factor of $b\,{\rm log}\,n$ is not feasible on general systems, we give an $O(n^3)$ optimal algorithm based on dynamic programming for a special class of systems. The subclass of systems considered here are those systems in which the directed acyclic graph (DAG) obtained by condensing the strongly connected components (SCCs) of the state digraph to vertices (see Section~\ref{sec:prelim} for more details) forms a line graph\footnote{A line graph is a graph which is a directed elementary path starting at the root vertex and ending at the tip vertex.}. Further, we assume that the state bipartite graph (see Section~\ref{sec:prelim} for more details) has a perfect matching. Note that there exists a wide class of systems called as {\it self-damped} systems \cite{ChaMes:13} that have a perfect matching in $\B(\bA)$, for example consensus dynamics in multi-agent systems and epidemic equations. However, for the class of systems whose state bipartite graph does not have a perfect matching but the DAG of SCCs is a line graph, we give an $O(n^3)$ algorithm based on dynamic programming and minimum cost perfect matching that gives a 2-optimal solution to Problem~\ref{prob:one}. We have the following theorem (see Section~\ref{sec:line} for the proof) as our another main result.
\begin{theorem}\label{th:twostage}
Consider a structured system $(\bA, \bB, \bC)$ and a feedback cost matrix $P$ given as input to Algorithms~\ref{alg:dynamic}~and~\ref{alg:twotage}. Let the DAG of SCCs of the state digraph be a line graph and $p^\*$ denote the optimal cost of Problem~\ref{prob:one}. Then,
\begin{itemize}
\item[(i)] if state bipartite graph has a perfect matching, then output $\bK^a$ of Algorithm~\ref{alg:dynamic} is an optimal solution to Problem~\ref{prob:one}, i.e., $P(\bK^a) = p^\*$.
\item[(ii)] if state bipartite graph does not have a perfect matching, then output $\bK^A$ of Algorithm~\ref{alg:twotage} is a 2-optimal solution to Problem~\ref{prob:one}, i.e., $P(\bK^A) \leqslant 2\,p^\*$.
\end{itemize}
\end{theorem}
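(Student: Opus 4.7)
The plan is to treat the two parts of the theorem separately, but to base both on a common no-SFM characterization adapted to the line-graph structure of the DAG of SCCs. Recall the classical result (Pichai--Sezer--\v{S}iljak) that $(\bA, \bB, \bC, \bK)$ has no SFMs if and only if (C1) every state vertex of the digraph $\D(\bA, \bB, \bC, \bK)$ lies on some input-to-output path that uses feedback edges (an accessibility/observability condition on the SCCs of the closed-loop digraph), and (C2) the associated state-plus-feedback bipartite graph admits a perfect matching. My first step would be to rephrase (C1) under the line-graph assumption: if the SCCs of $\D(\bA)$ are linearly ordered as $C_1 \!\to\! C_2 \!\to\! \cdots \!\to\! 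C_k$, then a single feedback edge from an output touching $C_j$ to an input touching $C_i$ (with $i \leqslant j$) simultaneously ``covers'' all SCCs $C_i, C_{i+1}, \ldots, C_j$. Hence satisfying (C1) is equivalent to selecting a collection of feedback edges whose associated intervals $[i,j]$ on the line of SCCs jointly cover $\{1, \ldots, k\}$.

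For part (i), the perfect matching hypothesis on $\B(\bA)$ trivially discharges (C2) independently of $\bK$, so Problem~\ref{prob:one} reduces to a minimum-weight interval cover on the linearly ordered SCCs. This has the standard optimal-substructure property: letting $f(i)$ denote the minimum cost of covering $\{1, \ldots, i\}$, one has
\[
f(i) \;=\; \min_{(r,s,e): r \leqslant i \leqslant s} \bigl\{ f(r-1) + P_e \bigr\},
\]
where the minimum is over feedback edges $e$ whose induced interval is $[r,s]$ and $P_e$ is its cost. I would verify by the usual exchange argument that Algorithm~\ref{alg:dynamic} evaluates exactly this recurrence over the $O(n^2)$ candidate edges in $O(n^3)$ time, and therefore its output $\bK^a$ satisfies $P(\bK^a) = p^\*$.

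For part (ii), where $\B(\bA)$ lacks a perfect matching, both (C1) and (C2) impose genuine requirements on $\bK$. The key structural claim I would establish is that the two requirements decouple cleanly in the line-graph setting: the feedback edges needed to complete a perfect matching in the bipartite graph (call this subproblem $\Pi_M$) can be chosen independently of those needed for the interval cover (call this subproblem $\Pi_C$), because each subproblem imposes constraints on disjoint combinatorial structures (matching deficiency vs.\ SCC coverage). Let $p^\*_M$ and $p^\*_C$ denote the optima of $\Pi_M$ (solvable by minimum-cost perfect matching on a suitable bipartite graph) and $\Pi_C$ (solvable by the DP of part (i)) respectively. Since any $\bK \in \K_s$ is feasible for both $\Pi_M$ and $\Pi_C$, one has $\max\{p^\*_M, p^\*_C\} \leqslant p^\*$, and since Algorithm~\ref{alg:twotage} returns the union of two solutions, $P(\bK^A) \leqslant p^\*_M + p^\*_C \leqslant 2\,p^\*$.

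The main obstacle, and where I would spend most of the proof effort, is the decoupling claim underlying part (ii): one must show that the union $\bK^A$ of a minimum-cost matching-completion and a minimum-cost interval cover actually yields a closed-loop system with no SFMs, rather than merely satisfying (C1) and (C2) for two different $\bK$'s. This amounts to checking that adding the matching edges does not destroy the interval cover (they never do, as edges only help), and dually that adding the cover edges does not destroy the matching -- which uses the fact that augmenting a bipartite graph by extra edges preserves the existence of a perfect matching. Once this closure-under-union property is pinned down, together with the line-graph reduction of (C1) established at the outset, both parts of the theorem follow.
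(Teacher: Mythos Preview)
Your proposal is correct and follows essentially the same route as the paper: for part~(i) you reduce condition~a) under the line-graph assumption to a minimum-cost interval cover on $\{1,\ldots,\ell\}$ and observe that Algorithm~\ref{alg:dynamic} computes exactly the natural DP recurrence for it (the paper proves this by induction on $k$ rather than an exchange argument, but the content is identical); for part~(ii) your inequality chain $\max\{p^\*_M,p^\*_C\}\leqslant p^\*$ and $P(\bK^A)\leqslant p^\*_M+p^\*_C\leqslant 2p^\*$ is precisely the paper's three-line argument. Your closing paragraph on the closure-under-union property is more explicit than the paper, which simply asserts that $\bK^A$ is feasible, but the monotonicity observation you give is exactly what justifies that assertion.
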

The organization of the paper is as follows: in Section~\ref{sec:prelim} we discuss graph theoretic preliminaries used in the sequel, few existing results, related work in this area and our key contributions. In Section~\ref{sec:hard} we prove the hardness of the problem using a reduction of the weighted set cover problem. We also give the negative approximation result of the problem in this section. In Section~\ref{sec:line} we discuss two special classes of linear dynamical systems. For the first class of systems considered we give a polynomial time optimal algorithm based on dynamic programming for solving Problem~\ref{prob:one}. For the second class of systems, we give a polynomial time 2-optimal approximation algorithm for solving Problem~\ref{prob:one}. In Section~\ref{sec:eg} we explain our dynamic programming algorithm given in Section~\ref{sec:line} through an illustrative example. Finally, in Section~\ref{sec:conclu} we give the concluding remarks.
\section{Preliminaries, Existing Results and Related Work}\label{sec:prelim}
In this section we firstly discuss few graph theoretic preliminaries and existing results. Subsequently, we discuss the related work in the area of feedback selection problem and then describe how our work is different. 
\subsection{Preliminaries and Existing Results}
Graph theory is a key tool in the analysis of structured systems since there exist easy-to-check graph theoretic necessary and sufficient conditions for various structural properties of the system. In the case of feedback selection, there exist conditions that can be verified in polynomial time to check if generic pole placement is possible for the resulting system \cite{PapTsi:84}. These conditions are solely based on the closed-loop system digraph denoted as $\D(\bA, \bB, \bC, \bK)$ which is constructed as follows: we define the state digraph $\D(\bA) := \D(V_X, E_X)$ where $V_X = \{x_1,\ldots,x_n\}$ and an edge $(x_j, x_i) \in E_{\x}$ if $\bA_{ij} \neq 0$. Thus a directed edge $(x_j, x_i)$ exists if state $x_j$ can {\it influence} state $x_i$. Now we define the system digraph $\D(\bA, \bB, \bC) := \D(V_{\x}\cup V_{\u}\cup V_{Y}, E_{\x}\cup E_{\u}\cup E_{Y})$, where  $V_{U} = \{u_1, \ldots, u_m \}$ and $V_{Y} = \{y_1, \ldots, y_p \}$. An edge $(u_j, x_i) \in E_{U}$ if $\bB_{ij} \neq 0$ and an edge $(x_j, y_i) \in E_{Y}$ if $\bC_{ij} \neq 0$. Thus a directed edge $(u_j, x_i)$ exists if input $u_j$ can {\it actuate} state $x_i$ and a directed edge $(x_j, y_i)$ exists if output $y_i$ can {\it sense} state $x_j$. Then the closed-loop system digraph $\D(\bA, \bB, \bC, \bK) := \D(V_{\x}\cup V_{\u}\cup V_{Y}, E_{\x}\cup E_{\u}\cup E_{Y}\cup E_{K})$, where $(y_j, u_i) \in E_K$ if $ \bK_{ij}\neq 0$. Here a directed edge $(y_j, u_i)$ exists if output $y_j$ can be {\it fed to} input $u_i$.

A digraph is said to be strongly connected if for each ordered pair of vertices $(v_1,v_k)$
there exists an elementary path from $v_1$ to $v_k$. A strongly connected component (SCC) is a subgraph that consists of a maximal set of strongly connected vertices. Using the SCCs of $\D(\bA)$ we construct a directed acyclic graph (DAG), where each node in the DAG is an SCC of $\D(\bA)$. Also, the edges in the DAG are such that, there exists an edge between two nodes in the DAG if and only if there exists an edge in $\D(\bA)$ that connects two states in those SCCs. For the state digraph $\D(\bA)$, we have the following characterization for SCCs.
\begin{defn}\label{def:scc}
An SCC is said to be \underline{linked} if it has at least one incoming or outgoing edge from another SCC. Further, an SCC is said to be \underline{non-top} \underline{linked} (\underline{non-bottom} \underline{linked}, resp.) if it has no incoming (outgoing, resp.) edges to (from, resp.) its vertices from (to, resp.) the vertices of another SCC.
\end{defn}
Now, using the closed-loop system digraph $\D(\bA, \bB, \bC, \bK)$ the following result has been shown \cite{PicSezSil:84}.
\begin{prop} [\cite{PicSezSil:84}, Theorem 4]\label{prop:SFM} 
A structured system $(\bA, \bB, \bC)$ have no structurally fixed modes with respect to an information pattern $\bK$ if and only if the following conditions hold:

\noindent a)~in the digraph $\D(\bA, \bB, \bC, \bK)$, each state node $x_i$ is contained in an SCC which includes an edge from $E_K$, and 

\noindent b)~there exists a finite node disjoint union of cycles $\C_g = (V_g, E_g)$ in $\D(\bA, \bB, \bC, \bK)$ where $g$ 
belongs to the set of natural numbers such that $V_X \subseteq \cup_{g}V_g$.
\end{prop}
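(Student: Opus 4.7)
The plan is to derive both parts from the characterization in Proposition~\ref{prop:SFM}, specialized to the line-graph structure of the SCC DAG of $\D(\bA)$. Let $S_1, \ldots, S_k$ denote the SCCs listed from root to tip. The starting observation is that a feedback edge $(y_j, u_i)$ closes a cycle in $\D(\bA,\bB,\bC,\bK)$ only along directed paths from the state sensed by $y_j$ back to the state actuated by $u_i$; because the SCC DAG is a line, such a cycle exists iff $u_i$ actuates a state in some $S_b$ and $y_j$ senses a state in some $S_a$ with $b \leq a$, in which case the edge merges exactly $S_b, S_{b+1}, \ldots, S_a$ into a single SCC of the closed-loop digraph that contains the feedback edge. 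Consequently, condition~(a) of Proposition~\ref{prop:SFM} holds iff, viewing each selected feedback edge as an interval $[b,a] \subseteq \{1,\ldots,k\}$, these intervals cover every index in $\{1,\ldots,k\}$.

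For condition~(b), a node disjoint union of cycles in $\D(\bA,\bB,\bC,\bK)$ covering $V_X$ corresponds to a perfect matching in the bipartite graph on $V_X$ and a copy $V_X'$ whose edges are those of $\B(\bA)$ augmented by virtual state-to-state edges $(x_j, x_i)$ induced by feedback paths $x_j \to y \to u \to x_i$. Under the hypothesis of part~(i), $\B(\bA)$ already has a perfect matching, so condition~(b) holds for \emph{every} $\bK$ and may be ignored. Problem~\ref{prob:one} thus reduces to minimum weight interval cover on a line, which admits the standard $O(k^2)$ DP: letting $f(i)$ denote the minimum cost of covering $\{1,\ldots,i\}$,
\[
f(i) \;=\; \min_{[b,a]\,:\, b \leq i \leq a}\bigl\{ f(b-1) + c([b,a]) \bigr\},
\]
where $c([b,a])$ is the minimum cost of a feedback edge whose induced interval is $[b,a]$ (keeping only the cheapest edge per class is clearly optimal). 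I would then verify that Algorithm~\ref{alg:dynamic} implements exactly this recursion, with intervals precomputed from $\bB$, $\bC$ and $P$, giving $P(\bK^a) = p^\*$ and proving (i).

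For part~(ii), the optimal feedback $\bK^\*$ must satisfy \emph{both} Proposition~\ref{prop:SFM} conditions, so $p^\* \geq p_a^\*$, the minimum interval-cover cost required for~(a), because the feedback edges of $\bK^\*$ constitute such a cover; and $p^\* \geq p_b^\*$, the minimum cost of a set of feedback edges that completes $\B(\bA)$ to a perfect matching via the virtual-edge construction above, because $\bK^\*$ must in particular achieve condition~(b). Algorithm~\ref{alg:twotage} outputs the union of an optimal interval cover (from Algorithm~\ref{alg:dynamic}) and a minimum cost perfect-matching completion; since adding edges to one component cannot destroy the other condition, the union is feasible, whence
\[
P(\bK^A) \;\leq\; p_a^\* + p_b^\* \;\leq\; 2\,p^\*.
\]

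The main technical difficulty I anticipate lies in the bipartite reformulation of condition~(b) in the presence of feedback: one must argue carefully that the problem of supplying the missing matching edges at minimum feedback cost can be cast as a single minimum cost perfect matching problem on an explicit, polynomially-sized auxiliary bipartite graph whose weights encode feedback costs via the $x_j \to y \to u \to x_i$ composition, and that the price of a composite virtual edge is correctly accounted for when the same underlying feedback edge $(y,u)$ serves both an interval-cover role in Algorithm~\ref{alg:dynamic} and a matching role in Algorithm~\ref{alg:twotage}. The $2$-approximation bound tolerates this double counting, but verifying that the union of the two solutions simultaneously satisfies (a) and (b) --- rather than merely summing their costs --- is the subtle step.
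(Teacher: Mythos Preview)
Your proposal does not address Proposition~\ref{prop:SFM}. That proposition is a \emph{cited} result (Theorem~4 of \cite{PicSezSil:84}); the paper offers no proof of it and uses it only as a black-box graph-theoretic characterization of the absence of structurally fixed modes. There is therefore nothing in the paper to compare your argument against for this statement.

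What you have actually sketched is a proof of Theorem~\ref{th:twostage}. If that was your intent, your approach coincides with the paper's. For part~(i) the paper argues by induction on $k$ that $W([k])$ is the optimal cost to cover $\cC_1,\ldots,\cC_k$, which is precisely the interval-cover dynamic program you describe (your $f(i)$ is the paper's $W([i])$ and your interval left endpoint $b$ is the paper's $t_k(i)$). For part~(ii) the paper gives exactly the two-line bound $p^\* \geqslant P(\bK^a)$, $p^\* \geqslant P(\bK^b)$, then sums. Your anticipated ``technical difficulty'' for condition~(b) is not handled via virtual state-to-state edges as you propose, but via Proposition~\ref{prop:match}: the paper works directly on the enlarged bipartite graph $\B(\bA,\bB,\bC,\bK)$, which already contains input, output, and feedback vertices together with the self-loop edge sets $\E_{\mathbb U},\E_{\mathbb Y}$, and assigns cost $P_{ij}$ only to edges in $\E_K$. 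A minimum-cost perfect matching there yields $\bK^b$ without any composite-edge accounting, and the potential double counting between $\bK^a$ and $\bK^b$ is absorbed by the factor~$2$, as you correctly observe.
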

 
Condition~a) can be checked in $O(n^2)$ computations \cite{AhoHop:74} and condition~b) can be checked in $O(n^{2.5})$ computations \cite{PapTsi:84}. If $\D(\bA)$ is a single SCC, then the graph is said to be {\it irreducible}. In such a case satisfying condition~a) in Proposition~\ref{prop:SFM} is trivial as any single $(y_i, u_j)$ edge is enough to satisfy the required. Then, solving Problem~\ref{prob:one} simplifies to satisfying condition~b) optimally which is polynomial \cite{PeqKarPap:15}. For checking condition~b) in Proposition~\ref{prop:SFM} an easy to check condition based on the concept of information paths is given in \cite{PapTsi:84}. However, using the bipartite graph $\B(\bA, \bB, \bC, \bK)$ constructed using the adjacency matrix given in \cite{PicSezSil:84}, there exists a matching condition for checking condition~b), where $\B(\bA, \bB, \bC, \bK) := \B(V_{X'} \cup V_{U'} \cup V_{Y'}, V_{X}\cup V_{U} \cup V_{Y}, \E_{\x} 
 \cup \E_{\u} \cup \E_Y \cup \E_K \cup \E_\mathbb{U} \cup \E_\mathbb{Y})$, where $V_{X'}=\{x'_1, \ldots, x'_n \}$, $V_{U'}=\{u'_1, \ldots, u'_m \}$, $V_{Y'} = \{y'_1, \ldots, y'_p \}$ and $V_{X}=\{x_1, \ldots, x_n \}$, $V_{U}=\{u_1, \ldots, u_m \}$ and $V_{Y} = \{y_1, \ldots, y_p \}$. Also, $(x'_i, x_{j}) \in \E_{\x} \Leftrightarrow (x_j, x_i) \in E_{\x}$, $(x'_i, u_{j}) \in \E_{\u} \Leftrightarrow (u_j, x_i) \in E_{\u}$, $(y'_{j}, x_i) \in \E_{Y} \Leftrightarrow (x_i, y_j) \in E_{Y}$ and $(u'_i, y_{j}) \in \E_{K} \Leftrightarrow (y_j, u_i) \in E_{K}$. Moreover, $\E_\mathbb{U}$ include edges $(u'_i, u_i)$ for $i =1,\ldots,m$ and  $\E_\mathbb{Y}$ include edges $(y'_j, y_j)$ for $j =1,\ldots,p$. Given a bipartite graph $G(V, \widetilde{V}, \E)$, where $V \cap \widetilde{V} = \phi$ and $\E \subseteq V \times \widetilde{V}$, a matching $M$ is a collection of edges $M \subseteq \E$ such that no two edges in $M$ share a common end point. For $|V|= |\widetilde{V}|$, if $|M| =|V|$, then $M$ is said to be a perfect matching, where $|D|$ denotes the cardinality of the set $D$. Using $\B(\bA, \bB, \bC, \bK)$ the following result has been shown \cite{MooChaBel:17}.
 
\begin{prop}[\cite{MooChaBel:17}, Theorem 3]\label{prop:match}
Consider a closed-loop structured system $(\bA, \bB, \bC, \bK)$. Then, the bipartite graph $\B(\bA, \bB, \bC, \bK)$ has a perfect matching if and only if all state nodes are spanned by disjoint union of cycles in $\D(\bA, \bB, \bC, \bK)$.
\end{prop}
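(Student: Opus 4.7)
The plan is to prove both directions of the equivalence by constructing an explicit correspondence between perfect matchings of $\B(\bA, \bB, \bC, \bK)$ and spanning disjoint cycle unions on $V_X$ in $\D(\bA, \bB, \bC, \bK)$, mirroring the classical bijection between perfect matchings in the bipartite double of a digraph and its cycle covers. The key observation is that, by the construction of $\B$, each non-self-loop bipartite edge corresponds to a unique arc of $\D$: the edge $(w', v)$ records that $v$ is a predecessor of the vertex whose primed copy is $w'$. The auxiliary self-loops $(u'_i, u_i) \in \E_\mathbb{U}$ and $(y'_j, y_j) \in \E_\mathbb{Y}$ exist precisely to allow input and output vertices to be skipped when they are not traversed by any cycle.

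For the forward direction, starting from a perfect matching $M$, I would reinterpret each of its non-self-loop edges as the corresponding arc of $\D$. Since $M$ saturates every $x'_i$, each state vertex acquires a unique designated predecessor in $V_X \cup V_U$; since $M$ saturates every $x_i$ on the right, each state vertex acquires a unique designated successor in $V_X \cup V_Y$. Every input $u_i$ matched through $\E_K$ rather than its self-loop ends up with in- and out-arc degree exactly one in the selected arc set, and similarly for outputs. Following successor arcs from an arbitrary $x_i$ therefore produces a walk that never stalls and, visiting each vertex at most once, must close into a simple cycle through $x_i$. The union of these cycles is node-disjoint and spans $V_X$.

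For the reverse direction, given a spanning disjoint cycle family $\C_g = (V_g, E_g)$, I would translate each arc $(v, w) \in \bigcup_g E_g$ into the bipartite edge $(w', v)$ and then append $(u'_i, u_i)$ for every input lying on no cycle and $(y'_j, y_j)$ for every output lying on no cycle. A direct incidence count then shows that every primed and every unprimed vertex is matched exactly once: state vertices by their cycle arcs, and inputs or outputs either by cycle arcs or by reserved self-loops, according to whether they are traversed.

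The main obstacle will be maintaining consistency at the input and output vertices. An input $u_i$ lying on some cycle must contribute exactly one bipartite in-edge $(u'_i, y_j) \in \E_K$ and one bipartite out-edge $(x'_k, u_i) \in \E_U$ to $M$, with its self-loop excluded; an input lying on no cycle must instead use its self-loop, which forbids any other edge incident to $u_i$ or $u'_i$ in $M$. Verifying that exactly one of these two regimes holds for each input and each output is the step where the hypothesis that the cycles cover $V_X$ propagates into saturation of the extra bipartite vertices, and this is what upgrades a cycle cover of the state nodes into a perfect matching of the whole bipartite graph.
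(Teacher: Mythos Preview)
Your argument is correct and is exactly the standard correspondence between perfect matchings in the bipartite double of a digraph (augmented by input/output self-loops) and vertex-disjoint cycle families spanning the state vertices. The one point you should make fully explicit is the coupling at each input and output vertex: because the only bipartite edges incident to $u'_i$ are $(u'_i,y_j)\in\E_K$ and the self-loop $(u'_i,u_i)$, and the only edges incident to $u_i$ on the right are $(x'_k,u_i)\in\E_U$ and the same self-loop, a perfect matching forces the dichotomy ``self-loop on both sides'' versus ``feedback edge on the left and actuation edge on the right''; this is precisely what guarantees that each input has in-degree and out-degree either both $0$ or both $1$ in the induced arc set, and the analogous statement holds for outputs. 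Once that is stated, the rest of your outline goes through without difficulty.

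Note, however, that the present paper does not actually supply a proof of this proposition: it is quoted as Theorem~3 of \cite{MooChaBel:17} and used as a black box. So there is no ``paper's own proof'' to compare against here; your write-up would stand on its own as an independent proof of the cited result.
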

In a special case, if the state bipartite graph $\B(\bA) := \B(V_{X'}, V_{X}, \E_{\x})$ has a perfect matching, then all state nodes lie in node disjoint cycles that consists of only $x_i$'s. Thus condition~b) is satisfied even without using any feedback edge. 

Summarizing, in general given a closed-loop structured system $(\bA, \bB, \bC, \bK)$ we can check if the system has SFMs or not in $O(n^{2.5})$ computations. Unlike checking for SFMs, designing minimum cost feedback matrix such that the closed-loop system does not have SFMs is computationally hard. Now we discuss some related work in this area.
\subsection{Related Work}
Structural analysis of systems has achieved research interest these days because of its wide range of applications in the real world systems and complex networks. The strength of these analyses are that they require only the graph of the system to study an equivalence class of systems whose graph pattern are the same, i.e., many structural properties are generic in nature \cite{Mur:87}. The system properties like controllability and observability of structured systems is introduced in \cite{Lin:74}. Here, we discuss only the most relevant literature. Structural analysis for various other problems can be found in \cite{LiuBar:16} and references therein.  

The concept of fixed modes under static feedback
structural constraints is introduced in \cite{WanDav:73}. Later, algebraic characterization of the  fixed modes is given in \cite{AndCle:81}. In the case of structured systems, non-existence of structurally fixed modes is a generic property. Hence, if a structured system has no SFMs, then arbitrary pole placement is possible for {\it almost all} numerical realizations of it \cite{SezSil:81}. Thus structural analysis of systems for no SFMs criteria is helpful in studying an equivalence class of control systems. There are necessary and sufficient graph theoretic conditions for checking the existence of SFMs in structured systems \cite{PicSezSil:84}, \cite{PapTsi:84}. Here, optimal selection of a feedback matrix for arbitrary pole placement is our main focus. Next we describe existing work in this area.

Feedback selection for arbitrary pole placement is considered in \cite{Sez:83}, \cite{UnySez:89}, \cite{PeqKarPap:15}, \cite{PeqKarAgu_2:16}, \cite{CarPeqAguKarPap:15} and \cite{MooChaBel:17}. Reference \cite{Sez:83} considers sparsest feedback selection for a given structured system $(\bA, \bB, \bC)$. The authors proposed a method for finding the minimum set of feedback edges by determining the minimum number of inputs and outputs, which itself is an NP-hard problem to solve \cite{PeqSouPed:15}. Reference \cite{UnySez:89} considers optimal feedback selection when each feedback edge is associated with a cost and proposes an algorithm that gives a sub-optimal solution. However, the approach given is by solving a NP-hard problem, the multi-commodity network flow problem. In short, the scheme given in \cite{UnySez:89} is sub-optimal and not polynomial. Due to this, the algorithm proposed in \cite{Sez:83} or  \cite{UnySez:89} is not a polynomial time solution to the feedback selection problem. Given a structured state matrix $\bA$, finding jointly sparsest $\bB$, $\bC$ and $\bK$ such that the closed-loop system has no SFMs is considered in \cite{PeqKarAgu_2:16}. Finding a minimum cost input-output set and  feedback matrix for a given structured system $(\bA, \bB, \bC)$ such that the resulting closed-loop system has no SFMs is considered in \cite{PeqKarPap:15}, when input, output and every feedback edge  are associated with costs. However, because of the NP-hardness of the problem, a special class of systems where the state digraph is irreducible is considered by the authors. Given $(\bA, \bB, \bC)$ and costs corresponding to each input and output, finding a minimum cost input-output set such that by connecting all outputs in the set to all inputs in the set, the resulting closed-loop system has no SFMs is considered in \cite{MooChaBel:17}. Since the problem is NP-hard, the authors of \cite{MooChaBel:17} proposed an order optimal polynomial time approximation algorithm. This paper deals with finding a minimum cost $\bK$ for arbitrary pole placement for the given structured system $(\bA, \bB, \bC)$ when each feedback edge has a cost associated with it. If all costs are non-zero and uniform, then the problem boils down to finding a sparsest $\bK$. This problem is considered in \cite{CarPeqAguKarPap:15}. The authors of \cite{CarPeqAguKarPap:15} showed the NP-hardness of the problem using reduction of a known NP-complete problem, the input-output decomposition problem. However, there are no known approximation algorithms available for the input-output decomposition problem, nor there are any inapproximability results in the literature, to the best of our knowledge. 
  
\noindent Next we list our key contributions.

\noindent $\bullet$ We prove that even when $\B(\bA)$ has a perfect matching and $\D(\bA)$ has only one non-top linked SCC, Problem~\ref{prob:one} is NP-hard.

\noindent $\bullet$ We prove that Problem~\ref{prob:one} cannot be approximated within a multiplicative factor $b\,{\rm log\,}n$, where $n$ denotes the number of states in the system and $0 < b < \frac{1}{4}$. 

\noindent $\bullet$ We give a polynomial time optimal algorithm of complexity $O(n^3)$ for solving Problem~\ref{prob:one} for a special class of systems whose DAG of SCCs is a line graph and $\B(\bA)$ has a perfect matching. 

\noindent $\bullet$  We give a polynomial time 2-optimal approximation algorithm of complexity $O(n^3)$ for solving Problem~\ref{prob:one} for systems whose DAG of SCCs is a line graph and $\B(\bA)$ does not have a perfect matching.

Using the details given in this section, now we give a hardness result of Problem~\ref{prob:one} in the next section.
\section{Hardness Results}\label{sec:hard}
Given a structured system $(\bA, \bB, \bC)$ finding a sparsest feedback matrix that satisfies arbitrary pole placement is known to be NP-hard \cite{CarPeqAguKarPap:15}. The authors proved the NP-hardness using a reduction of the input-output decomposition problem \cite{Tar:84} to an instance of Problem~\ref{prob:one} where $\B(\bA)$ has a perfect matching, $\D(\bA, \bB, \bC, \bK^\*)$ has two SCCs and input-output set is dedicated\footnote{Every input (output, resp.) can actuate (sense, resp.) a single state only.}. In this section we prove the hardness of Problem~\ref{prob:one} using another well known NP-hard problem, the set cover problem. The reason to do this is to prove inapproximability result in addition to the NP-hardness of the problem. We prove the inapproximability result by showing that the problem cannot be approximated within a multiplicative factor $b\,{\rm log\,}n$, where $n$ denotes the number of states in the system and $b$ is a constant. The NP-hard result is obtained by reducing the weighted set cover problem to an instance of Problem~\ref{prob:one}. We prove that the problem is NP-hard even for the case where the state bipartite graph $\B(\bA)$ has a perfect matching and there is only one non-top linked SCC in $\D(\bA)$. We first detail the weighted set cover problem for the sake of completeness. Given universe $\U = \{ 1,2, \cdots, N\}$ of $N$ items, $r$ sets $\P = \{\S_1, \S_2, \cdots, \S_r \}$ with $\S_i \subset \U$ and $\bigcup_{i = 1}^r \S_i = \U$ and a weight function $w: \P \rightarrow \R$, the weighted set cover problem consists of finding a set $\S^\* \subseteq \P$ such that $\cup_{\S_i \in \S^\*} \S_i = \U$ and $\sum_{\S_i \in \S^\*}w(i) \leqslant \sum_{\S_i \in \widetilde{\S}} w(i)$ for any $\widetilde{\S}$ that satisfies $\cup_{\S_i \in \widetilde{\S}} = \U$. In order to prove the hardness of the problem we give a reduction of a general instance of the weighted set cover problem to an instance of Problem~\ref{prob:one}.

\begin{algorithm}[t]
  \caption{Pseudo-code for reducing the weighted set cover problem to an instance of Problem~\ref{prob:one}
  \label{alg:set}}
  \begin{algorithmic}
\State \textit {\bf Input:} Weighted set cover problem with universe $\U= \{1,\ldots, N\}$, sets $\P=\{\S_1,\ldots, \S_r\}$ and weight function $w$ 
\State \textit{\bf Output:} Structured system $(\bA, \bB, \bC)$ and feedback cost matrix $P$ 
\end{algorithmic}
  \begin{algorithmic}[1]
  \State Define a structured system $(\bA, \bB, \bC)$ as follows:
  \State  $\bA_{ij} \leftarrow \begin{cases}
\*, \mbox{~for~} i = j,\\
\*, \mbox{~for~} i \in \{1,\ldots,N\}\mbox{~and~} j = N+1, \\
0, \mbox{~otherwise}.
  \end{cases} $\label{step:A}
   \State  $\bB_{i1} \leftarrow \begin{cases}
\*, \mbox{~for~} i = N+1,  \\
0, \mbox{~otherwise}.\label{step:B}
  \end{cases} $ 
     \State  $\bC_{ij} \leftarrow \begin{cases} 
\*, \mbox{~for~} j \in \S_i, \\
0, \mbox{~otherwise}.\label{step:C}
  \end{cases} $ 
\State Define feedback cost matrix $P$ as: 
\State $P_{1j} \leftarrow 
w(j), \mbox{~for~} j \in \{1,\ldots,r\}$.\label{step:cost}
 \State Given a solution $\bK$ to Problem~\ref{prob:one} on $(\bA, \bB, \bC)$, define: 
\State Sets selected under $\bK$, $\S(\bK) \leftarrow \{\S_j:  \bK_{1j} \neq 0\}$, \label{step:set}
\State Weight of the set, $w(\S(\bK)) \leftarrow \sum_{\S_i \in \S(\bK)}w(i)$ \label{step:setcost}.
\end{algorithmic}
\end{algorithm}

The pseudo-code showing a polynomial time reduction of the weighted set cover problem to an instance of Problem~\ref{prob:one} is presented in Algorithm~\ref{alg:set}. Consider a general instance of the weighted set cover problem consisting of universe $\U$ with $|\U| = N$, sets $\P = \{\S_1,\ldots,\S_r\}$ and weight $w$. We construct a structured system $(\bA, \bB, \bC)$ that has states $x_1,\ldots, x_{N+1}$, input $u_1$ and outputs $y_1, \ldots, y_r$. $\bA \in \{ 0, \*\}^{(N+1) \times (N+1)}$ is constructed as shown in Step~\ref{step:A}. Notice that in $\D(\bA)$ every state has an edge to itself. In addition, state $x_{N+1}$ has an edge to all other states. Thus $\B(\bA)$ has a perfect matching, $M=\{(x'_i, x_i) \mbox{~for~} i\in \{1,\ldots,N+1 \}\}$. Hence, condition~b) in Proposition~\ref{prop:SFM} is satisfied. Next, $\bB \in \{0, \*\}^{(N+1) \times 1}$ is constructed as shown in Step~\ref{step:B}. We consider a single input $u_1$ that connects to state $x_{N+1}$ only. Now, $\bC \in \{0, \*\}^{r \times (N+1)}$ is constructed as shown in Step~\ref{step:C}. Notice that construction of $\bC$ relies on $\P$. Specifically, $\bC_{ij} = \*$ if $x_j \in \S_i$. Finally, the cost matrix $P$, which gives the costs for feeding outputs $y_j$'s to input $u_1$, is defined as shown in Step~\ref{step:cost}. Note that $\bK = \{\bK_{ij} = \*, \mbox{ for all } i,j\} \in \K_s$. Thus $\K_s$ is non-empty. 

An illustrative example showing the above construction is given in Figure~\ref{fig:sys1}. We solve Problem~\ref{prob:one} on the structured system $(\bA, \bB, \bC)$ and cost matrix $P$  obtained in Algorithm~\ref{alg:set}. Let $\bK$ be a solution. Then the sets selected under $\bK$ and its cost is defined as in Steps~\ref{step:set}~and~\ref{step:setcost} respectively. 
\begin{figure}
\begin{center}
\begin{tikzpicture}[->,>=stealth',shorten >=1pt,auto,node distance=1.65cm, main node/.style={circle,draw,font=\scriptsize\bfseries}]
\definecolor{myblue}{RGB}{80,80,160}
\definecolor{almond}{rgb}{0.94, 0.87, 0.8}
\definecolor{bubblegum}{rgb}{0.99, 0.76, 0.8}
\definecolor{columbiablue}{rgb}{0.61, 0.87, 1.0}

  \fill[almond] (-1,-2) circle (7.0 pt);
  \fill[almond] (-2,-2) circle (7.0 pt);
  \fill[almond] (0,0) circle (7.0 pt);
  \fill[almond] (0,-2) circle (7.0 pt);
  \fill[almond] (1,-2) circle (7.0 pt);
  \fill[almond] (2,-2) circle (7.0 pt);
  \node at (-2,-2) {\small $x_1$};
  \node at (-1,-2) {\small $x_2$};
  \node at (0,0) {\small $x_6$};
  \node at (0,-2) {\small $x_3$};
  \node at (1,-2) {\small $x_4$};
  \node at (2,-2) {\small $x_5$};

  \fill[bubblegum] (0,1) circle (7.0 pt);
   \node at (0,1) {\small $u_1$};
   
  \fill[columbiablue] (-1.5,-3) circle (7.0 pt);
  \fill[columbiablue] (-0.5,-3) circle (7.0 pt);
  \fill[columbiablue] (1.5,-3) circle (7.0 pt);
   
   \node at (-1.5,-3.0) {\small $y_1$};
   \node at (-0.5,-3.0) {\small $y_2$};
   \node at (1.5,-3.0) {\small $y_3$};

  \draw (0,0.75)  ->   (0,0.25);
  \draw (0,-0.25)  ->   (-2,-1.75);
  \draw (0,-0.25)  ->   (-1,-1.75);
  \draw (0,-0.25)  ->   (0,-1.75);
  \draw (0,-0.25)  ->   (1,-1.75);
  \draw (0,-0.25)  ->   (2,-1.75);

  \draw (-2,-2.25)  ->   (-1.5,-2.75);
  \draw (-1,-2.25)  ->   (-1.5,-2.75);
  \draw (-1,-2.25)  ->   (-0.5,-2.75);
  \draw (0,-2.25)  ->   (-0.5,-2.75);
  \draw (1,-2.25)  ->   (1.5,-2.75);
  \draw (2,-2.25)  ->   (1.5,-2.75);
  \draw (0,-2.25)  ->   (1.5,-2.75);  
\path[every node/.style={font=\sffamily\small}]
(-2.05,-1.75) edge[loop above] (-2,-1)
(-1.05,-1.75)edge[loop above] (-1,-1)
(0.1,0.25) edge[loop above]  (0,0)
(0.1,-1.75) edge[loop above]  (0,-1)
(1.0,-1.75) edge[loop above]  (1,-1)
(2.0,-1.75) edge[loop above]  (2,-1);

\end{tikzpicture}
\caption{Digraph $\D(\bA, \bB, \bC)$ constructed using Algorithm~\ref{alg:set} for a weighted set cover problem with $\U = \{1,\ldots,5\}$, $\P= \{\S_1, \S_2, \S_3\}$, where $\S_1 = \{1,2 \}$, $\S_2 = \{2,3\}$ and $\S_3 = \{3,4,5\}$.}
\label{fig:sys1}
\end{center}
\end{figure} 
Now we formulate and prove the following.

\begin{lem}\label{lem:set_prob}
Consider the weighted set cover problem $\U, \P, w$ and a structured system $(\bA, \bB, \bC)$ and feedback cost matrix $P$ constructed using Algorithm~\ref{alg:set}. For this structured system, $\bK \in \K_s$ if and only if $\S(\bK)$ covers $\U$. Moreover, $w(\S(\bK)) = P(\bK)$.
\end{lem}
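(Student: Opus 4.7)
The plan is to apply the two SFM conditions from Proposition~\ref{prop:SFM} to the digraph constructed in Algorithm~\ref{alg:set}. First I would dispense with condition~b) entirely: by Step~\ref{step:A} every diagonal entry $\bA_{ii}$ is $\*$, so each state carries a self-loop in $\D(\bA)$ and $\{(x'_i, x_i) : i = 1,\ldots,N+1\}$ is a perfect matching of $\B(\bA)$. By the remark following Proposition~\ref{prop:match}, condition~b) of Proposition~\ref{prop:SFM} is met for every choice of $\bK$, so membership in $\K_s$ reduces to condition~a).

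Next I would analyse the SCC structure of $\D(\bA,\bB,\bC,\bK)$. In $\D(\bA)$ alone, the only edges out of a state (besides its self-loop) are the edges $x_{N+1} \to x_i$ for $i \leqslant N$, so the SCCs of $\D(\bA)$ are singletons. Feedback edges are of the form $y_j \to u_1$ whenever $\bK_{1j}=\*$, and the only input edge is $u_1 \to x_{N+1}$, while output edges from Step~\ref{step:C} give $x_j \to y_i$ exactly when $j \in \S_i$. For each $i \in \{1,\ldots,N\}$, the state $x_i$ lies in an SCC containing a feedback edge iff there is a cycle through $x_i$ using some $(y_k,u_1)$ edge; the only such cycle has the form
\[
x_i \;\to\; y_k \;\to\; u_1 \;\to\; x_{N+1} \;\to\; x_i,
\]
which exists iff some $k$ satisfies $i \in \S_k$ and $\bK_{1k}=\*$. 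For $x_{N+1}$, any feedback edge $(y_k,u_1)$ with $\S_k \neq \emptyset$ closes a cycle $u_1 \to x_{N+1} \to x_j \to y_k \to u_1$ (for any $j \in \S_k$); this is automatic once the $x_i$-condition is satisfied for all $i \leqslant N$ (since $\bigcup_i \S_i = \U$ forces at least one selected $\S_k$ to be nonempty).

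Putting this together, condition~a) of Proposition~\ref{prop:SFM} holds iff for every $i \in \{1,\ldots,N\}$ there exists $k$ with $\bK_{1k} = \*$ and $i \in \S_k$; equivalently, $\bigcup_{\S_k \in \S(\bK)} \S_k = \U$, i.e.\ $\S(\bK)$ covers $\U$. This establishes the first assertion. The cost statement then follows immediately by expanding definitions:
\[
P(\bK) \;=\; \sum_{j:\,\bK_{1j}=\*} P_{1j} \;=\; \sum_{j:\,\bK_{1j}=\*} w(j) \;=\; \sum_{\S_j \in \S(\bK)} w(j) \;=\; w(\S(\bK)).
\]
I expect no serious obstacle here; the only care needed is in the SCC analysis, specifically checking that every cycle involving a feedback edge must pass through $u_1$ and $x_{N+1}$, so the existence of a covering set $\S_k$ for each $x_i$ is both necessary and sufficient.
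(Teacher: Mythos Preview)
Your proposal is correct and follows essentially the same route as the paper: both dispense with condition~b) of Proposition~\ref{prop:SFM} via the perfect matching in $\B(\bA)$ coming from the diagonal of $\bA$, and then argue that condition~a) holds for each state $x_i$ (with $i\leqslant N$) precisely when element $i$ is covered by $\S(\bK)$, treating $x_{N+1}$ separately. The paper packages each direction as a proof by contradiction whereas you give a direct cycle analysis, but the underlying observation---that the only way $x_i$ can sit in an SCC containing a feedback edge is through a cycle $x_i\to y_k\to u_1\to x_{N+1}\to x_i$ with $i\in\S_k$ and $\bK_{1k}=\*$---is identical.
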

\begin{proof}
\noindent{\bf Only-if part:} We assume $\bK \in \K_s$ and then show that $\S(\bK)$ is a cover. Given $\bK$ is a solution to Problem~\ref{prob:one}. Note that the structured system $(\bA, \bB, \bC)$ constructed in Algorithm~\ref{alg:set} has a perfect matching in $\B(\bA)$. Thus condition~b) in Proposition~\ref{prop:SFM} is satisfied without using any feedback edge. However, $\bK$ satisfies condition~a). Now we need to prove $\cup_{\S_i \in \S(\bK)} = \U = \{1, \ldots, N \}$. Suppose not. Then there exists an element $j \in \U$ that is not covered by $\S(\bK)$. Let $\S(\bK)$ consist of sets $\S_{i_1}, \ldots, \S_{i_k}$ and the corresponding outputs are $y_{i_1}, \ldots, y_{i_k}$. Thus $\bK_{1r} = \*$, for $r \in \{i_1, \ldots, i_k \}$. Since element $j$ is not covered by $\S({\bK})$, there does not exist $y \in \{ y_{i_1}, \ldots, y_{i_k} \}$ that has edge $(x_j, y)$. Thus $x_j$ does not satisfy condition~a) in Proposition~\ref{prop:SFM}. This contradicts the assumption that $\bK$ is a solution to Problem~\ref{prob:one}. This proves the only-if part of the proof.

\noindent{\bf If part:} We assume that $\S(\bK)$ is a cover and then show that $\bK \in \K_s$. Suppose not. Since $\B(\bA)$ has a perfect matching all state nodes lie in disjoint cycles which consists of only state nodes. Thus condition~b) in Proposition~\ref{prop:SFM} is satisfied without using any feedback edge. Thus $\bK \notin \K_s$ implies that there exists a state $x_j$ that does not satisfy condition~a). Let $\bK \in \{0, \*\}^{1 \times (N+1)}$ has $\*$'s at indices $i_1, \ldots, i_k$. That means outputs $y_{i_1}, \ldots, y_{i_k}$ are fed back to input $u_1$. The corresponding sets are $\S_{i_1}, \ldots, \S_{i_k}$. Assume $j \leqslant N$. Since $x_j$ does not satisfy condition~a), there does not exists $r \in \{ i_1, \ldots, i_k\}$ such that edge $(x_j, y_r)$ is present. Then, there exists no set $\S_r \in \{\S_{i_1}, \ldots, \S_{i_k}\}$ such that element $j$ is covered. This contradicts the assumption that $\S(\bK)$ is a cover. Now if $j = N+1$, then $\{i_1, \ldots, i_k\} = \phi$. Thus no output is fed back to $u_1$ and thus $\S(\bK) = \phi$. This contradicts the assumption that $\S(\bK)$ is a cover. Thus $\bK \in \K_s$. This completes the if part of the proof.

Now Step~\ref{step:cost} in Algorithm~\ref{alg:set} gives $w(\S(\bK)) = P(\bK)$ and this completes the proof.
\end{proof}

Next we prove the hardness of Problem~\ref{prob:one} using a reduction of the weighted set cover problem. We show that any instance of the weighted set cover problem can be reduced to an instance of Problem~\ref{prob:one} such that an optimal solution to Problem~\ref{prob:one} gives an optimal solution to the weighted set cover problem.

\begin{theorem}\label{th:NP1}
Consider a structured system $(\bA, \bB, \bC)$ and feedback cost matrix $P$ constructed using Algorithm~\ref{alg:set} corresponding to a weighted set cover problem. Let $\bK^\*$ be an optimal solution to Problem~\ref{prob:one} and $\S(\bK^\*)$ be the cover corresponding to $\bK^\*$. Then, $\S(\bK^\*)$ is an optimal solution to the weighted set cover problem. Moreover, Problem~\ref{prob:one} is NP-hard. 
\end{theorem}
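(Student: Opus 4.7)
The plan is to use Lemma~\ref{lem:set_prob} as the workhorse: it already establishes a cost-preserving bijection (in terms of objective value) between feasible feedback matrices $\bK \in \K_s$ and set covers $\S(\bK)$ of $\U$. So the theorem reduces to transporting optimality across this bijection and then invoking the polynomial-time nature of Algorithm~\ref{alg:set}.

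First I would prove that $\S(\bK^*)$ is an optimal weighted set cover by contradiction. Suppose there exists a cover $\widetilde{\S} \subseteq \P$ with $\sum_{\S_i \in \widetilde{\S}} w(i) < \sum_{\S_i \in \S(\bK^*)} w(i)$. Construct a feedback matrix $\widetilde{\bK} \in \{0,\*\}^{1 \times r}$ by setting $\widetilde{\bK}_{1j} = \*$ if and only if $\S_j \in \widetilde{\S}$. Because $\widetilde{\S}$ covers $\U$, Lemma~\ref{lem:set_prob} (the ``if'' part) guarantees $\widetilde{\bK} \in \K_s$. Moreover, by the construction in Step~\ref{step:cost} of Algorithm~\ref{alg:set}, $P(\widetilde{\bK}) = \sum_{j: \widetilde{\bK}_{1j}=\*} w(j) = w(\widetilde{\S}) < w(\S(\bK^*)) = P(\bK^*)$, where the last equality is again from Lemma~\ref{lem:set_prob}. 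This contradicts the optimality of $\bK^*$ for Problem~\ref{prob:one}, so $\S(\bK^*)$ must be an optimal cover.

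Next, for NP-hardness, I would argue that Algorithm~\ref{alg:set} is a polynomial-time reduction from weighted set cover to Problem~\ref{prob:one}: the system has $N+1$ states, $1$ input, and $r$ outputs, and the matrices $\bA, \bB, \bC, P$ are written down in $O((N+r)^2)$ time. Given any polynomial-time algorithm that solves Problem~\ref{prob:one}, we could apply it to the instance produced by Algorithm~\ref{alg:set}, then extract $\S(\bK^*)$ in Step~\ref{step:set}, obtaining an optimal weighted set cover in polynomial time by the first part of the theorem. Since weighted set cover is NP-hard, Problem~\ref{prob:one} is NP-hard.

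I do not expect a real obstacle here — the main work was already done in Lemma~\ref{lem:set_prob}, so the only delicate point is to make sure both directions of the cover-$\leftrightarrow$-feedback correspondence are used: the ``if'' direction to build a hypothetical cheaper feasible $\widetilde{\bK}$ from an assumed cheaper cover, and the cost equality $w(\S(\bK)) = P(\bK)$ to compare weights on the same scale. One small thing to note explicitly is that $\K_s$ is non-empty in the constructed instance (the all-$\*$ matrix lies in $\K_s$, as already observed after Step~\ref{step:cost}), so $\bK^*$ genuinely exists and the reduction is well-posed.
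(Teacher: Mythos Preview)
Your proposal is correct and follows essentially the same contradiction argument as the paper: assume a cheaper cover $\widetilde{\S}$, build the corresponding $\widetilde{\bK}$, derive $P(\widetilde{\bK}) < P(\bK^\*)$, and conclude NP-hardness from the polynomial-time reduction. If anything, your version is slightly cleaner, since you invoke the ``if'' direction of Lemma~\ref{lem:set_prob} directly to obtain $\widetilde{\bK}\in\K_s$, whereas the paper re-verifies condition~a) of Proposition~\ref{prop:SFM} by explicitly tracing the cycle $u_1 \to x_{N+1} \to x_k \to y \to u_1$.
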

\begin{proof}
Given a general instance of the weighted set cover problem, we first construct a structured system $(\bA, \bB, \bC)$ and feedback cost matrix $P$ using Algorithm~\ref{alg:set}. Now we prove that a feasible solution to Problem~\ref{prob:one} gives a feasible solution to the weighted set cover problem. Then we prove that an optimal solution to Problem~\ref{prob:one} gives an optimal solution to the weighted set cover problem.

 Let $\bK$ be a feasible solution to Problem~\ref{prob:one}. Using Lemma~\ref{lem:set_prob} the sets selected under $\bK$, $\S(\bK)$ covers $\U$. Hence, $\S(\bK)$ is a feasible solution to the weighted set cover problem. For proving optimality, we use a contradiction argument. Let $\bK^\*$ be an optimal solution to Problem~\ref{prob:one}. From Lemma~\ref{lem:set_prob}, $\S(\bK^\*)$ covers $\U$ and $P(\bK^\*) = w(\S(\bK^\*))$. Thus $\S(\bK^\*)$ is a feasible solution to the weighted set cover problem. Now to prove optimality, we show that $w(\S(\bK^\*)) \leqslant w(\S)$ for any $\S$ that satisfies $\cup_{\S_i \in \S}\S_i = \U$. In other words, an optimal solution to Problem~\ref{prob:one} gives an optimal solution to the weighted set cover problem. Suppose not. Then there exists a cover $\widetilde{\S}$ such that $w(\widetilde{\S}) < w(\S(\bK^\*))$. Let $\widetilde{\S}$ consists of sets $\{\S_{i_1}, \S_{i_2}, \ldots, \S_{i_k}\}$ and the corresponding outputs are $\{y_{i_1}, y_{i_2},\ldots,y_{i_k} \}$. Note that there is only one input $u_1$. Connecting $\{y_{i_1}, y_{i_2},\ldots,y_{i_k} \}$ to $u_1$ satisfies condition~a) in Proposition~\ref{prop:SFM}. This is because for any non-bottom linked SCC, say $\cB_k = x_k$ there is some $y \in \{y_{i_1}, y_{i_2},\ldots,y_{i_k} \}$ connecting $x_k$. So, $u_1 \rightarrow x_{N+1} \rightarrow x_k \rightarrow y \rightarrow u_1$ is a cycle and hence $x_{N+1}$ and $x_k$ belong to the same SCC that has $y \rightarrow u_1$ edge. Since $\cB_k$ is arbitrary, condition~a) holds. So, $\{y_{i_1}, y_{i_2},\ldots,y_{i_k} \}$ is a feasible solution which has a cost given by the cost of the set cover. Thus for $\widetilde{\bK}= \{\widetilde{\bK}_{1j} = \*: j \in \{i_1,\ldots,i_k\} \}$, $P(\widetilde{\bK}) < P(\bK^\*)$. This contradicts the assumption that $\bK^\*$ is an optimal solution to Problem~\ref{prob:one}. This proves that an optimal solution to Problem~\ref{prob:one} gives an optimal solution to the weighted set cover problem.
 
Using Lemma~\ref{lem:set_prob} and since any optimal solution to Problem~\ref{prob:one} gives an optimal solution to the weighted set cover problem, Problem~\ref{prob:one} is NP-hard.
\end{proof}

Note that, for the structured system $(\bA, \bB, \bC)$ constructed in Algorithm~\ref{alg:set}, the bipartite graph $\B(\bA)$ has a perfect matching. Thus all state nodes lie in a cycle that consists of only $x_i$'s and thus condition~b) in Proposition~\ref{prop:SFM} is satisfied. Thus Theorem~\ref{th:NP1} implies that satisfying condition~a) optimally itself is NP-hard even for systems that has a single non-top linked SCC. 

Now we give the following lemma to show that an approximate solution to Problem~\ref{prob:one} on the structured system constructed using Algorithm~\ref{alg:set} gives an approximate solution to the weighted set cover problem. 

\begin{lem}\label{lem:epsilon}
Consider the weighted set cover problem and the structured system $(\bA, \bB, \bC)$ and cost matrix $P$ constructed in Algorithm~\ref{alg:set}. For $\epsilon > 1$, if there exists an $\epsilon$-optimal solution to Problem~\ref{prob:one}, then there exists an $\epsilon$-optimal solution to the weighted set cover problem.
\end{lem}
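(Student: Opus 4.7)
The plan is to exploit the exact cost-preserving correspondence between feasible feedback matrices and set covers that was established in Lemma~\ref{lem:set_prob}, together with the optimality correspondence from Theorem~\ref{th:NP1}. The machinery is already in place; the lemma is essentially a quantitative refinement of Theorem~\ref{th:NP1}, replacing ``optimal'' with ``within a multiplicative factor $\epsilon$.''

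First, I would let $\bK^\epsilon$ denote an $\epsilon$-optimal solution to Problem~\ref{prob:one} on the reduced instance $(\bA,\bB,\bC)$ with cost matrix $P$, so that $\bK^\epsilon \in \K_s$ and $P(\bK^\epsilon) \leqslant \epsilon\, p^\*$, where $p^\* = P(\bK^\*)$ is the optimal value of Problem~\ref{prob:one}. By the only-if direction of Lemma~\ref{lem:set_prob}, the associated collection $\S(\bK^\epsilon)$ (as defined in Step~\ref{step:set} of Algorithm~\ref{alg:set}) is a feasible cover of $\U$, and its total weight satisfies the exact identity $w(\S(\bK^\epsilon)) = P(\bK^\epsilon)$.

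Next I would argue that the optimal set-cover weight equals $p^\*$. By Theorem~\ref{th:NP1}, the cover $\S(\bK^\*)$ obtained from an optimal $\bK^\*$ is itself an optimal solution to the weighted set cover problem; denote its weight by $w^\*$. Applying Lemma~\ref{lem:set_prob} to $\bK^\*$ gives $w^\* = w(\S(\bK^\*)) = P(\bK^\*) = p^\*$. Combining this equality with the bound from the previous paragraph yields
\begin{equation*}
w(\S(\bK^\epsilon)) \;=\; P(\bK^\epsilon) \;\leqslant\; \epsilon\, p^\* \;=\; \epsilon\, w^\*,
\end{equation*}
so $\S(\bK^\epsilon)$ is an $\epsilon$-optimal solution to the weighted set cover problem, establishing the claim.

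There is no real obstacle here; the only thing to double-check is that the reduction of Algorithm~\ref{alg:set} is cost-exact (not merely cost-monotone), which is exactly the content of the ``Moreover'' clause of Lemma~\ref{lem:set_prob} and follows immediately from Step~\ref{step:cost}. Since the $\epsilon$-optimality bound is a purely multiplicative statement about costs, the exact equality $w(\S(\bK)) = P(\bK)$ propagates the approximation factor without loss, and the proof reduces to the short chain of (in)equalities displayed above.
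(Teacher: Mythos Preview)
Your proposal is correct and follows essentially the same approach as the paper's proof: both invoke Theorem~\ref{th:NP1} to identify $p^\* = w(\S(\bK^\*)) = w(\S^\*)$, and then use the cost identity $w(\S(\bK)) = P(\bK)$ from Lemma~\ref{lem:set_prob} to propagate the multiplicative bound, yielding the same chain of (in)equalities. Your write-up is slightly more explicit about feasibility of $\S(\bK^\epsilon)$, but the argument is the same.
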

\begin{proof}
The proof of this lemma is twofold: (i)~we show that an optimal solution $\bK^\*$ to Problem~\ref{prob:one} gives an optimal solution $\S(\bK^\*)$ to the weighted set cover problem, and (ii)~we show that, if $P(\bK) \leqslant \epsilon\,p^\*$, then $w(\S(\bK)) \leqslant \epsilon\,w(\S^\*)$.

Note that (i)~is proved in Theorem~\ref{th:NP1}. Now for proving~(ii) we use Lemma~\ref{lem:set_prob}. Given
\begin{eqnarray*}
p(\bK) & \leqslant & \epsilon p^\*,\\
w(\S(\bK)) & \leqslant &  \epsilon p^\*,\\
& = &  \epsilon\, w(\S(\bK^\*)),\\
& = &  \epsilon\, w(\S^\*).
\end{eqnarray*}
This completes the proof.
\end{proof}

\remove{Now we show a reduction of the weighted set cover problem to an instance of Problem~\ref{prob:one}, where the bipartite graph $\B(\bA)$ has a perfect matching and $\D(\bA)$ has only one non-bottom linked SCC. The motivation for doing this is to show another simple instance of structured system for which Problem~\ref{prob:one} is NP-hard.
\begin{algorithm}[t]
  \caption{Pseudo-code for reducing the weighted set cover problem to an instance of Problem~\ref{prob:one} 
  \label{alg:set2}}
  \begin{algorithmic}
\State \textit {\bf Input:} Weighted set cover problem with universe $\U= \{1,\ldots, N\}$, sets $\P=\{\S_1,\ldots, \S_r\}$ and weight function $w$ 
\State \textit{\bf Output:} structured system $(\bA, \bB, \bC)$ and feedback cost matrix $P$ 
\end{algorithmic}
  \begin{algorithmic}[1]
  \State Define a structured system $(\bA, \bB, \bC)$ as follows:
  \State  $\bA_{ij} \leftarrow \begin{cases}
\*, \mbox{~for~} i = j,\\
\*, \mbox{~for~} i \in \{1,\ldots,N\} \mbox{~and~} j = N+1, \\
0, \mbox{~otherwise}.
  \end{cases} $\label{step:A2}
  \State  $\bB_{ij} \leftarrow \begin{cases}
\*, \mbox{~for~} i \in \S_j, \\
0, \mbox{~otherwise}.\label{step:B2}
  \end{cases} $ 
   \State  $\bC_{1j} \leftarrow \begin{cases}
\*, \mbox{~for~} j = N+1, \\
0, \mbox{~otherwise}.\label{step:C2}
  \end{cases} $ 
\State Define feedback cost matrix $P$ as: 
\State $P_{i1} \leftarrow 
w(j), \mbox{~for~} i \in \S_j$.\label{step:cost2}
 \State Given a solution $\bK$ to Problem~\ref{prob:one} on $(\bA, \bB, \bC)$, define: 
\State Sets selected under $\bK$, $\S(\bK) \leftarrow \{\S_i: \bK_{ij} \neq 0 \}$, \label{step:set2}
\State Weight of the set, $w(\S(\bK)) \leftarrow \sum_{\S_i \in \S(\bK)}w(i)$ \label{step:setcost2}.
\end{algorithmic}
\end{algorithm}

The pseudo-code showing a reduction of the weighted set problem to another instance of Problem~\ref{prob:one} is presented in Algorithm~\ref{alg:set2}. Consider a general instance of weighted set cover problem consisting of universe $\U$ with $|\U| = N$, sets $\P = \{\S_1,\ldots,\S_r\}$ and weight $w$. Now we construct a structured system $(\bA, \bB, \bC)$ that has states $x_1,\ldots, x_{N+1}$, inputs $u_1,\ldots,u_r$ and output $y_1$. $\bA \in \{ 0, \*\}^{(N+1) \times (N+1)}$ is constructed as shown in Step~\ref{step:A2}. Notice that in $\D(\bA)$ every state has an edge to itself. In addition all states $x_1,\ldots, x_N$ has an edge to state $x_{N+1}$. Thus $\B(\bA)$ has a perfect matching, $M=\{(x'_i, x_i) \mbox{~for~} i\in \{1,\ldots,N+1 \}\}$ and hence, condition~b) in Proposition~\ref{prop:SFM} is satisfied. Next, $\bB \in \{0, \*\}^{(N+1) \times r}$ is constructed as shown in Step~\ref{step:B2}. Notice that construction of $\bB$ relies on $\P$. Specifically, $\bB_{ij} = \*$ if $x_i \in \S_j$. Now $\bC$ is constructed as shown in Step~\ref{step:C2}. Here, we consider a single output $y_1$ that connects to state $x_{N+1}$. Note that $\bK$ has all $\*$'s and the cost for connecting $y_1$ to $u_j$'s is given in Step~\ref{step:cost2}. 
Now we solve Problem~\ref{prob:one} on the structured system $(\bA, \bB, \bC)$ and cost matrix $P$ obtained in Algorithm~\ref{alg:set2}. Let $\bK$ be a solution. The sets selected under $\bK$ and its cost is defined as in Steps~\ref{step:set2}~and~\ref{step:setcost2} respectively. Now we prove the following lemma.
}

 Next we prove a negative result that shows that Problem~\ref{prob:one} cannot be approximated up to a constant factor. The inapproximability result holds even for systems whose state bipartite graph $\B(\bA)$ has a perfect matching and $\D(\bA)$ has a single non-top linked SCC. We use the following proposition in the proof.
\begin{prop}\cite[Corollary 3.4]{LunYan:94}\label{prop:inapprox}
For any $0 < b < 1/4$, the set covering problem cannot be approximated within factor $b\,{\rm log\,}N$ in polynomial time unless $NTIME(n^{{\rm poly\,log\,}\,n}) = DTIME(n^{{\rm poly\,log\,}\,n})$, where $N$ denotes the number of items in the universe.
\end{prop}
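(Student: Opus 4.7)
The plan is to prove the set-cover inapproximability via a gap-producing reduction from a suitably hard NP problem, combining a two-prover one-round interactive proof system with combinatorial \emph{partition systems}. This is the Lund--Yannakakis strategy: one converts a hardness-of-satisfaction gap, guaranteed by PCP/MIP machinery, into a hardness-of-covering gap that scales like ${\rm log\,}N$.

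First, I would fix a language $L \in \mbox{NP}$ together with a two-prover one-round protocol for $L$, and amplify its soundness error by parallel repetition to $2^{-\Omega({\rm log}^{c}\,n)}$ while keeping the verifier's randomness $r(n)$ and the prover answer length $a(n)$ poly-logarithmic in $n$. This is the source of the $n^{{\rm poly\,log\,}\,n}$-time hypothesis: obtaining the tight factor $\tfrac{1}{4}\,{\rm log\,}N$ requires the soundness to be small enough that the reduction sizes are quasi-polynomial rather than polynomial, which is why we cannot rule the problem out under the weaker assumption $\mbox{P} \neq \mbox{NP}$.

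Next, I would construct a partition system $(B,\pi_{1},\ldots,\pi_{L})$: a ground set $B$ of size $m$ and $L$ partitions of $B$, each into $d$ blocks, such that any sub-collection containing at most one block from each $\pi_{i}$ needs at least $(1-o(1))\,d\,{\rm ln\,}m$ blocks to cover $B$. A standard probabilistic / derandomized construction provides such a system with $m$ polynomial in $d$ and ${\rm log\,}L$. From this I would build the set-cover instance: the universe is a disjoint union $\bigcup_{r} B_{r}$ of copies of $B$, one per random string $r$ of the verifier, and the candidate sets are indexed by (query, answer) pairs for the two provers, each set consisting of blocks of the relevant $\pi_{i}$ encoding ``this prover committed to this answer and the verifier accepts.''

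Finally, I would analyse the two cases. In the YES case, an honest pair of prover strategies yields a cover of size $\Theta(2^{r(n)})$, essentially one set per random string per prover. In the NO case, MIP soundness forces that for all but a tiny fraction of random strings $r$ no small collection of answers can consistently satisfy the verifier at $r$; the partition-system bound then implies that roughly $(1-o(1))\,d\,{\rm ln\,}m$ blocks inside $B_{r}$ must be selected. Tracking $N = 2^{r(n)}\,m$ and tuning $d$ jointly with the soundness parameter yields an optimum-cover gap of $\tfrac{1}{4}\,{\rm log\,}N$. The hard part will be pinning the constant at $1/4$: a loose analysis only delivers a $c\,{\rm log\,}N$ lower bound for some unspecified $c>0$, while the stated $1/4$ requires careful joint tuning of the partition-system parameters, the parallel-repetition count, and the answer-space density so that the completeness and soundness cover sizes have \emph{exactly} the claimed ratio; the MIP/PCP black box is assumed, so the delicate work is really this combinatorial arithmetic at the interface.
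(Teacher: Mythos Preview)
The paper does not prove this proposition at all: it is stated as a citation of \cite[Corollary~3.4]{LunYan:94} and used as a black box in the subsequent proof of Theorem~\ref{th:inapprox}. So there is no ``paper's own proof'' to compare against here.

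Your outline is a faithful high-level sketch of the original Lund--Yannakakis argument from the cited reference: the two-prover one-round MIP with amplified soundness, the partition-system gadget, the universe $\bigcup_r B_r$ indexed by verifier randomness, and the YES/NO gap analysis are all the right ingredients, and you correctly identify that the quasi-polynomial hypothesis $NTIME(n^{{\rm poly\,log\,}n}) = DTIME(n^{{\rm poly\,log\,}n})$ arises because the reduction size is governed by $2^{r(n)}$ with $r(n)$ poly-logarithmic. Your caveat about the constant $1/4$ is also well placed: that is indeed where the delicate bookkeeping lies, and a coarse version of the argument only gives some unspecified $c\,\log N$. For the purposes of this paper, though, none of that machinery needs to be reproduced; the proposition is invoked, not proved.
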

\noindent{\it Proof~of~Theorem~\ref{th:inapprox}}:
By Proposition~\ref{prop:inapprox} the set cover problem cannot be approximated within factor $b\,{\rm log}\,N$ for $0 < b < 1/4$, where $N$ denotes the cardinality of the universe. Thus the weighted set cover problem  also cannot be approximated within factor $b\,{\rm log}\,N$ since set cover is a special case where all weights are non-zero and uniform. However, by Lemma~\ref{lem:epsilon}, if there exists an approximation algorithm that gives an $\epsilon$-optimal solution to Problem~\ref{prob:one} for a structured system constructed using Algorithm~\ref{alg:set}, then it gives an $\epsilon$-optimal solution to the weighted set cover problem. Thus, since weighted set cover cannot be approximated to factor $b\,{\rm log}\,N$ for $0 < b < 1/4$, Problem~\ref{prob:one} also cannot be approximated even for this special case. Hence, the inapproximability holds for the general structured systems also. This completes the proof. 
\qed
\remove{
The weighted set cover problem can also be reduced to another instance of Problem~\ref{prob:one} where $\B(\bA)$ has a perfect matching and $\D(\bA)$ has only one non-bottom linked SCC. The construction can be shown very similar to that given in Algorithm~\ref{alg:set}. This construction differs from the one given in Algorithm~\ref{alg:set} in these aspects. Here, the directions of all the edges in $\D(\bA)$ are reversed. Also, the roles of $\bB$ and $\bC$ in Algorithm~\ref{alg:set} are interchanged here. Thus there exists only one output in this construction. Figure~\ref{fig:sys2} shows an example of such a construction. For this reduction, we have the following result. 

\begin{figure}[t]
\begin{center}
\begin{tikzpicture}[->,>=stealth',shorten >=1pt,auto,node distance=1.85cm, main node/.style={circle,draw,font=\scriptsize\bfseries}]
\definecolor{myblue}{RGB}{80,80,160}
\definecolor{almond}{rgb}{0.94, 0.87, 0.8}
\definecolor{bubblegum}{rgb}{0.99, 0.76, 0.8}
\definecolor{columbiablue}{rgb}{0.61, 0.87, 1.0}

  \fill[almond] (-1,0) circle (7.0 pt);
  \fill[almond] (-2,0) circle (7.0 pt);
  \fill[almond] (0,-2) circle (7.0 pt);
  \fill[almond] (0,0) circle (7.0 pt);
  \fill[almond] (1,0) circle (7.0 pt);
  \fill[almond] (2,0) circle (7.0 pt);
  \node at (-2,0) {\small $x_1$};
  \node at (-1,0) {\small $x_2$};
  \node at (0,-2) {\small $x_6$};
  \node at (0,0) {\small $x_3$};
  \node at (1,0) {\small $x_4$};
  \node at (2,0) {\small $x_5$};

  \fill[bubblegum] (-1,1) circle (7.0 pt);
  \fill[bubblegum] (0.5,1) circle (7.0 pt);
  \fill[bubblegum] (1.5,1) circle (7.0 pt);
  \node at (-1,1) {\small $u_1$};
  \node at (0.5,1) {\small $u_2$};
  \node at (1.5,1) {\small $u_3$};
   
  \fill[columbiablue] (0,-3) circle (7.0 pt);
   \node at (0,-3) {\small $y_1$};
        
  \draw (0,-2.25)  ->   (0,-2.75);
  \draw (-2,-0.25)  ->   (0,-1.75);
  \draw (-1,-0.25)  ->   (0,-1.75);
  \draw (0,-0.25)  ->   (0,-1.75);
  \draw (1,-0.25)  ->   (0,-1.75);
  \draw (2,-0.25)  ->   (0,-1.75);
  
  \draw (-1,0.75)  ->   (-2,0.25);
  \draw (-1,0.75)  ->   (-1,0.25);
  \draw (-1,0.75)  ->   (0,0.25);
  \draw (0.5,0.75)  ->   (0,0.25);
  \draw (0.5,0.75)  ->   (1,0.25);
  \draw (1.5,0.75)  ->   (1,0.25);
  \draw (1.5,0.75)  ->   (2,0.25);

\path[every node/.style={font=\sffamily\small}]
(-2.05,0.2) edge[loop above] (-2,-1)
(-0.92,0.2)edge[loop above] (-1,-1)
(0.0,0.2) edge[loop above]  (0,0)
(1.0,0.2) edge[loop above]  (1,-1)
(2.0,0.2) edge[loop above]  (2,-1)
(-0.1,-2.25) edge[loop below]  (0,-2);

\end{tikzpicture}
\caption{Digraph $\D(\bA, \bB, \bC)$ constructed for a weighted set cover problem with $\U = \{1,\ldots,5\}$, $\P= \{\S_1, \S_2, \S_3\}$, where $\S_1 = \{1,2,3 \}$, $\S_2 = \{3,4\}$ and $\S_3 = \{4,5\}$.}
\label{fig:sys2}
\end{center}
\end{figure} 

\begin{theorem}\label{th:NP2}
Consider a structured system $(\bA, \bB, \bC)$ and feedback cost matrix $P$ such that $\B(\bA)$ has a perfect matching and $\D(\bA)$ has one non-bottom linked SCC. Then, solving Problem~\ref{prob:one} on this structured system is NP-hard. Also, for $\epsilon > 1$, if there exists an $\epsilon$-optimal solution to Problem~\ref{prob:one} on this system, then there exists an $\epsilon$-optimal solution to the weighted set cover problem. 
\end{theorem}
The proof of Theorem~\ref{th:NP2} is omitted as it follows in the similar lines of the proof of Lemma~\ref{lem:epsilon} and Theorem~\ref{th:NP1}.

\begin{rem}\label{rem:hard}
Solving Problem~\ref{prob:one} is NP-hard even when the bipartite graph $\B(\bA)$ has a perfect matching and state digraph $\D(\bA)$ has only one non-top (non-bottom, resp.) linked SCC. If $\B(\bA)$ has a perfect matching, all state nodes lie in a single cycle that consists of only $x_i$'s and condition~b) in Proposition~\ref{prop:SFM} is satisfied. Thus satisfying condition~a) in Proposition~\ref{prop:SFM} optimally itself is NP-hard even when $\D(\bA)$ has only one non-top (non-bottom, resp.) linked SCC.
\end{rem}
}

The following result considers a special case, i.e., single input systems with one non-top linked SCC. An example of this instance is single leader multi-agent dynamics. We show that Problem~\ref{prob:one} reduces to a weighted set cover problem in this case. Using this reduction we give an $O(n^2)$ algorithm that gives an $O({\rm log\,}n)$ approximation to Problem~\ref{prob:one}, where $n$ denotes the number of states.
\begin{theorem}\label{lem:single_non_top}
Consider a structured system $\D(\bA, \bB, \bC)$ and a feedback cost matrix $P$. Let $\B(\bA)$ has a perfect matching and $\D(\bA)$ has a single non-top linked SCC. Further, assume that $\bB$ is a single input. Then, there exists an algorithm that gives an $O({\rm log\,} n)$ approximate solution to Problem~\ref{prob:one}, where $n$ denotes the number of states. Moreover, the complexity of the algorithm is $O(n^2)$.
\end{theorem}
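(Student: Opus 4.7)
My plan is to reduce this restricted instance of Problem~\ref{prob:one} to an instance of the weighted set cover problem, and then invoke the classical greedy algorithm for weighted set cover, which is well known to deliver an $H(n) = O(\log n)$ approximation in polynomial time.

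First, since $\B(\bA)$ has a perfect matching, condition~b) of Proposition~\ref{prop:SFM} is already met by the state cycles alone, so no feedback edge needs to contribute a cycle covering any state. Thus the entire optimization reduces to meeting condition~a) at minimum cost. Because $\bB$ is a single input, every feedback edge takes the form $(y_j, u_1)$, and any closed-loop SCC containing such an edge contains precisely the states $x_i$ with $u_1 \leadsto x_i$ in $\D(\bA, \bB)$ and $x_i \leadsto y_j$ in $\D(\bA, \bC)$. Because $\D(\bA)$ has a \emph{single} non-top linked SCC, every state of $\D(\bA)$ is reachable from this unique source SCC; and, assuming feasibility, $u_1$ must connect into this source SCC, so $u_1$ reaches every state. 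Hence the set of states that a feedback edge $(y_j, u_1)$ can ``cover'' in the sense of condition~a) is exactly
\[
T_j \;\defeq\; \{\, x_i \in V_X : x_i \leadsto y_j \text{ in } \D(\bA, \bC)\,\}.
\]
Problem~\ref{prob:one} on this instance is therefore equivalent to choosing a minimum-cost sub-collection $\S \subseteq \{T_1,\ldots,T_p\}$ (with weight $w(T_j) = P_{1j}$) that covers the universe $V_X$; the equivalence is essentially the same argument used in Lemma~\ref{lem:set_prob}.

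Next, I would apply the greedy weighted set cover algorithm: iteratively select the remaining $T_j$ minimizing the ratio $P_{1j}/|T_j \setminus \text{covered}|$ until $V_X$ is covered, and return the corresponding $\bK$. This is known to achieve an $H(n) = O(\log n)$ factor approximation to the optimal set cover, which by the equivalence above yields an $O(\log n)$ factor approximation to Problem~\ref{prob:one}.

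For the complexity bound, I would argue as follows. Computing SCCs of $\D(\bA)$ and its condensation DAG costs $O(n + |E_X|) = O(n^2)$ (Tarjan's algorithm). Computing all the sets $T_j$ can be done by a single reverse sweep on the condensation: for each SCC $C$, propagate backwards the set of output labels $j$ such that some state in $C$ is a predecessor of $y_j$; each $T_j$ can then be read off in total time $O(n^2)$. The greedy selection phase can be implemented in $O(\sum_j |T_j|) = O(n \cdot p) = O(n^2)$ time by maintaining, for each $T_j$, a counter of uncovered elements and updating counters whenever an element is newly covered. Summing the phases gives total complexity $O(n^2)$.

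The step I expect to require the most care is the complexity accounting rather than the correctness: showing that every $T_j$ can be computed, and the greedy step executed, without blowing up past $O(n^2)$ in the worst case where $p,m$ may be as large as $\Theta(n^2)$. If a naive implementation forces an $O(n^2 \log n)$ bound through priority queues, I would either absorb the logarithm into the already $O(\log n)$ approximation ratio or use an amortized bucket-based selection to hit the claimed $O(n^2)$ exactly.
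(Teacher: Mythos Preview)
Your proposal is correct and follows the same overall strategy as the paper: reduce this restricted instance of Problem~\ref{prob:one} to weighted set cover and invoke Chv\'atal's greedy algorithm for an $O(\log n)$ guarantee. The one noteworthy difference lies in the set-cover instance itself. You take the universe to be all of $V_X$ and let $T_j=\{x_i:x_i\leadsto y_j\}$, whereas the paper takes the universe to be only the \emph{non-bottom linked} SCCs $\cN_1^b,\ldots,\cN_\beta^b$ of $\D(\bA)$ and lets $\S_j=\{k:y_j\text{ directly senses some state in }\cN_k^b\}$. These two instances have exactly the same feasible covers (a state in a sink SCC can reach $y_j$ only if $y_j$ senses that SCC directly, and conversely every state lies on a path to some sink SCC), so their optima coincide and both correspond precisely to $\K_s$. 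The paper's smaller universe ($\beta\le n$) makes the complexity bookkeeping immediate and yields the marginally sharper ratio $H(\beta)$; your formulation is more direct and sidesteps the paper's separate argument that outputs sensing only intermediate SCCs are superfluous in an optimal solution, at the price of the reachability/counter maintenance you already flagged as the delicate step. Either route delivers the claimed $O(\log n)$ factor in $O(n^2)$ time (under the paper's standing convention $m,p=O(n)$; your worry about $p=\Theta(n^2)$ is not an issue here).
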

\begin{proof}
 Let $\D(\bA)$ has $\beta$ number of non-bottom linked SCCs, $\cN_1^b, \ldots, \cN_\beta^b$. If $\beta=1$, then the graph is irreducible, and the optimal solution to Problem~\ref{prob:one} $\bK^\*$ is given by, $\bK^\*_{1j} = \*$ corresponding to a minimum cost entry in $P$ and 0 otherwise. For $\beta > 1$, we show that any instance of Problem~\ref{prob:one} with single input that has a perfect matching in $\B(\bA)$ and a single non-top linked SCC in $\D(\bA)$ can be reduced to a weighted set cover problem. Let $\beta > 1$ and consider the following weighted set cover problem. Define the universe $\U = \{1,\ldots, \beta \}$, sets $\P = \{\S_1, \ldots, \S_p\}$, where $\S_i = \{k: x_j \in\cN_k^b \mbox{ and } \bC_{ij} = \* \}$. Notice that each set $\S_i$ corresponds to an output $y_i$, i.e, $\S_i$ consists of indices of all non-bottom linked SCCs that has some state vertex which is sensed by $y_i$. The weight function $w$ is defined as $w(i) = P_{1i}$. For a cover $\S$ of the weighted set cover problem, we define the corresponding feedback matrix $\bK(\S)$ as $\bK(\S)_{1j} = \*$ if $\S_j \in \S$. Now, we show that $w(\S) = P(\bK(\S))$ and an optimal solution to the weighted set cover problem gives an optimal solution to Problem~\ref{prob:one}.

Since $\bK(\S)_{1j} = \*$ if $\S_j \in \S$ and $w(i) = P_{1i}$, $w(\S) = P(\bK(\S))$. To prove optimality, we first show that a feasible solution to the above weighted set cover problem gives a feasible solution to Problem~\ref{prob:one}. Let $\S$ be a feasible solution to the weighted set cover problem. Now we need to show that $\bK(\S)$ satisfies conditions~a)~and~b). Since $\B(\bA)$ has a perfect matching, condition~b) is satisfied without using any feedback edges. Thus only condition~a) has to be checked. Since $\D(\bA)$ has a single non-top linked SCC, say $\cN^t$, all other SCCs lie in a path from $\cN^t$ to some non-bottom linked SCC. Also notice that all non-bottom linked SCC must have a feedback edge, otherwise condition~a) is violated. Thus a set of feedback edges that have an edge connecting output connected to some state in the non-bottom linked SCC to the input, for all non-bottom linked SCCs satisfies condition~a). Thus $\bK(\S) \in \K_s$. Now we prove the optimality. Notice that an optimal solution does not include a feedback edge that has a $(y, u)$ edge where $y$ connects from an SCC that is not non-bottom linked. This is because any feasible solution must have feedback edges such that all non-bottom linked SCCs are connected from some outputs in it. Thus any $(y, u)$ edge where $y$ connects from an SCC that is not a non-bottom linked SCC does not add to satisfying condition~a), and hence it is superfluous. Thus such an edge is not present in any optimal solution and hence we need to consider only outputs going out of non-bottom linked SCCs. Let $\S^\*$ be an optimal solution to the weighted set cover problem. We prove optimality of $\bK(\S^\*)$ using a contradiction argument. Suppose $\bK(\S^\*)$ is not an optimal solution to Problem~\ref{prob:one}. Then there exists $\widetilde{\bK}$ such that $\widetilde{\bK} \in \K_s$ and $P(\widetilde{\bK}) < P(\bK(\S^\*))$. Since $\widetilde{\bK} \in \K_s$, condition~a) is satisfied. Let $\widetilde{\bK}_{1j} = \*$ for $j \in \{i_1, \ldots, i_y \}$. The corresponding sets are $\S_{i_1}, \ldots, \S_{i_y}$. Since condition~a) is satisfied, $\{i_1, \ldots, i_y \}$ consists of index of at least one state from all the non-bottom linked SCCs, otherwise there exists some state that does not satisfy condition~a). Thus $\widetilde{\S} = \{\S_{i_1}, \ldots, \S_{i_y}\}$ is a cover. Since $w(\S) = P(\bK(\S))$, we get $w(\widetilde{\S}) < w(\S^\*)$. This contradicts the assumption that $\S^\*$ is an optimal solution to the weighted set cover problem. Thus $\bK(\S^\*)$ is an optimal solution to Problem~\ref{prob:one}. 

Using similar arguments given in the proof of Lemma~\ref{lem:epsilon}, now we can also show that for $\epsilon > 1$, if $w(\S) \leqslant \epsilon\, w(\S^\*)$, then $P(\bK(\S)) \leqslant \epsilon\, P(\bK(\S^\*))$. Thus any approximation algorithm of the weighted set cover problem can be used to solve Problem~\ref{prob:one} on this class of systems. Chavtal gave a linear complexity $O({\rm log\,}N)$ approximation algorithm based on greedy scheme for solving the weighted set cover problem, where $N$ denotes the cardinality of the universe \cite{Chv:79}. Thus by using the same algorithm, we get an $O({\rm log\,}n)$ approximate solution to Problem~\ref{prob:one}, since $\beta = O(n)$. 

Now we give the complexity of the proposed scheme. Finding the SCCs in $\D(\bA)$ has $O(n^2)$ complexity and the greedy algorithm has $O(n)$ complexity. Thus the overall complexity to find an approximate solution is $O(n^2)$. This completes the proof.
\end{proof}
In multi-input case with uniform feedback costs, there exists a similar reduction of Problem~\ref{prob:one} to the set cover problem. Thus there exists $O({\rm log\,}n)$ approximation to Problem~\ref{prob:one} of complexity $O(n^2)$.

In the next section we consider Problem~\ref{prob:one} on two special classes of systems. We show that Problem~\ref{prob:one} can be solved optimally with polynomial complexity for one class of systems using a dynamic programming algorithm. Then we give a polynomial time 2-optimal approximation algorithm for the second class of systems for solving Problem~\ref{prob:one} using the dynamic programming algorithm proposed and a minimum cost perfect matching algorithm.
\section{Line Graph Systems}\label{sec:line}
In this section we consider structured systems whose directed acyclic graph (DAG) obtained by condensing SCCs in $\D(\bA)$ in to nodes is a line graph. In other words, the DAG constructed after condensing SCCs in $\D(\bA)$ to super nodes and connecting these super nodes if there exists an edge connecting two states in those SCCs is a directed path as shown in Figure~\ref{fig:line}.

Let $\{\cC_1, \ldots, \cC_\ell \}$ denote the {\it ordered} set of SCCs in $\D(\bA)$. Note that in this graph there is exactly one non-top linked SCC, $\cC_1$, and exactly one non-bottom linked SCC, $\cC_\ell$. We further assume that $\B(\bA)$ has a perfect matching. Thus condition~b) in Proposition~\ref{prop:SFM} is satisfied and hence solving Problem~\ref{prob:one} optimally is equivalent to satisfying condition~a) optimally. Note that connecting an output $y$ that is connected to $\cC_\ell$ to an input $u$ that is connected to $\cC_1$ may not be optimal to satisfy condition~a) as this connection can be very expensive when compared to the rest of the connections. Further, an optimal solution may consists of connections that cover some of the SCCs multiple times. This can happen if satisfying condition~a) is cheaper that way when compared to others. 

If the feedback costs are uniform, then Problem~\ref{prob:one} is trivial. In that case since $\cC_1$ is the only non-top linked SCC and $\cC_\ell$ is the only non-bottom linked SCC, connecting an output $y$ that connects to $\cC_\ell$ to an input $u$ that connects to $\cC_1$ will satisfy condition~a). Similarly, if the digraph $\D(\bA)$ is irreducible, that is $\D(\bA)$ is a single SCC, then also the solution is trivial. In that case connecting $(y_j, u_i)$ where $P_{ij}$ is the smallest entry in the matrix $P$ is optimal. Thus optimal solution $\bK^\*$ for this case has $\*$ only at one location, i.e., $\bK^\*_{ij}$. Figure~\ref{fig:line} shows a schematic diagram of the line graph whose vertices are SCCs in $\D(\bA)$. We prove that Problem~\ref{prob:one} can be solved in polynomial time for this class of systems. 
\begin{figure}
\begin{center}
\begin{tikzpicture}[->,>=stealth',shorten >=0.5pt,auto,node distance=1.85cm,
                thick,main node/.style={circle,draw,font=\small\bfseries}]

  \node[main node] (1) {$\cC_1$};
  \node[main node] (2) [right of=1] {$\cC_2$};
  \node[main node] (3) [right of=2] {$\cC_{i}$};
  \node[main node] (4) [right of=3] {$\cC_{j}$};
  \node[main node] (5) [right of=4] {$\cC_{\ell}$};

 \draw[] (1) -> (2);
 \draw[dashed] (2) -> (3);
 \draw[dashed] (3) -> (4);
 \draw[dashed] (4) -> (5);    
\end{tikzpicture}
\caption{The DAG of SCCs in $\D(\bA)$ of a structured system}
\label{fig:line}
\end{center}
\end{figure}

For this section the following assumption holds.
\begin{assume}\label{asm:line}
The DAG of SCCs in $\D(\bA)$ is a line graph.
\end{assume}

We propose a polynomial time algorithm for solving Problem~\ref{prob:one} for structured systems when the bipartite graph $\B(\bA)$ has a perfect matching and Assumption~\ref{asm:line} holds. The proposed algorithm is a dynamic programming algorithm. Since $\B(\bA)$ has a perfect matching, the algorithm aims at achieving condition~a) in Proposition~\ref{prop:SFM} optimally. The pseudo-code of the proposed scheme is presented in Algorithm~\ref{alg:dynamic}. 
\begin{algorithm}[]
  \caption{Pseudo-code of dynamic programming algorithm for solving Problem~\ref{prob:one} on structured systems when $\B(\bA)$ has a perfect matching and Assumption~\ref{asm:line} holds
  \label{alg:dynamic}}
  \begin{algorithmic}
\State \textit {\bf Input:} structured system $(\bA, \bB, \bC)$, feedback cost matrix $P$
\State \textit{\bf Output:} Feedback matrix $\bK^a$
\end{algorithmic}
  \begin{algorithmic}[1]
  \State $\{\cC_1, \ldots, \cC_{\ell}\}$ are the SCCs in $\D(\bA)$
    \State $U_k \leftarrow \{u_i: \bB_{ri} = \* \mbox{~and~} x_r \in \cC_k \}$\label{step:input}
  \State $Y_k \leftarrow \{y_j: \bC_{jr} = \*\mbox{~and~} x_r \in \cC_k \}$\label{step:output}
    \State $\cU_k \leftarrow \cup_{i = 1}^k U_i$\label{step:IPset1}
  \State $\cY_k \leftarrow \cup_{i =k}^\ell Y_i$ \label{step:IPset2}
    \State $W([0]) \leftarrow 0$, $\cS_0 \leftarrow \phi$
\For {$k = 1,\ldots,\ell$}
   \State $W([k])  \leftarrow$ min cost to keep $\{\cC_1, \ldots, \cC_{k}\}$ in cycles \label{step:step_k}
  \State $A_k \leftarrow \{(y_j, u_i):y_j \in \cY_k \mbox{~and~} u_i \in \cU_k \}$ \label{step:edge_k}
\State $t_k(i) \leftarrow {\rm min}_q \{u_i \in \cU_q: (y_j, u_i)\in A_k\}$ \label{step:minU_k}
  \State $W([k]) \leftarrow \mbox{min}_{(y_j, u_i) \in A_{k}}\{P_{ij} + W([t_k(i)-1])  \}$ \label{step:step_k+1} 
   \State If $W([k]) = P_{vw} + W([z])$, then $\cS_k \leftarrow (y_w, u_v) \cup \cS_{z}$ \label{step:edges_selected}
  \EndFor
  \State $\bK^a \leftarrow \{{\bK^a} _{ij} = \*:(y_j, u_i) \in  \cS_\ell \}$ \label{step:feedback}
\end{algorithmic}
\end{algorithm}

Consider a structured system $(\bA, \bB, \bC)$ and cost matrix $P$. We denote the SCCs in $\D(\bA)$ as $\{\cC_1, \ldots, \cC_{\ell}\}$. We define $U_k$ as the set of inputs that connect to some states in $\cC_k$ (see Step~\ref{step:input}). Similarly, we define $Y_k$ as the set of outputs that are connected from some states in $\cC_k$ (see Step~\ref{step:output}). Now we have the following definition.

\begin{defn}\label{def:cover}
An SCC $\cC_k$ is said to be covered if condition~a) is satisfied for all states in $\cC_k$. In other words, an edge $(y_j, u_i)$ covers $\cC_k$ if all the state nodes in $\cC_k$ lie in an SCC with edge $(y_j, u_i)$.
\end{defn}

 Note that connecting some $u_i \in U_k$ to some $y_j \in Y_k$ covers $\cC_k$. However, in addition to these there are other feedback edges that can cover $\cC_k$. To characterize all the feedback edges that cover SCC $\cC_k$, we define sets $\cU_k$ and $\cY_k$. Here $\cU_k$ consists of all inputs that are connected to some states in $\cC_j$'s for $j \leqslant k$. Similarly, $\cY_k$ consists of all outputs that are connected from some states in $\cC_j$'s for $j \geqslant k$. Thus $A_k = \{(y_j, u_i):y_j \in \cY_k \mbox{~and~} u_i \in \cU_k \}$ consists of all edges that cover SCC $\cC_k$ (see Step~\ref{step:edge_k}). The key insight for the dynamic programming based algorithm provided in Algorithm~\ref{alg:dynamic} is given in the following lemma.
\begin{lem}\label{lem:A_k}
Consider a structured system $(\bA, \bB, \bC)$ and cost matrix $P$ given as input to Algorithm~\ref{alg:dynamic}. Let $\bK^\*$ is an optimal solution to Problem~\ref{prob:one} and $\cS^\* = \{(y_j, u_i):\bK^\*_{ij} = \*\}$. Then, for all $k$, $\cS^\* \cap A_k \neq \phi$.
\end{lem}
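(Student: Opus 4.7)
The plan is to exploit the line-graph structure of the DAG of SCCs together with condition~a) of Proposition~\ref{prop:SFM}. Since $\bK^\* \in \K_s$, every state in $\cC_k$ must lie in some SCC of the closed-loop digraph $\D(\bA,\bB,\bC,\bK^\*)$ that contains an edge from $E_K$ (a feedback edge). So I would pick any state $x \in \cC_k$, take a feedback edge $(y_j,u_i) \in \cS^\*$ in the same closed-loop SCC as $x$, and then deduce, using Assumption~\ref{asm:line}, that this edge must in fact belong to $A_k$.

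First I would make precise what \emph{membership in the same SCC} gives us: there exists a directed cycle in $\D(\bA,\bB,\bC,\bK^\*)$ passing through $x$ and through the edge $(y_j,u_i)$. Traversing this cycle from $u_i$ forward, we reach a state $x_a$ with $u_i \in U_{q(a)}$, where $q(a)$ is the SCC index of $x_a$, and we continue through state edges until reaching $x$; since the DAG of SCCs is a line graph, every state-to-state transition is non-decreasing in SCC index, so $q(a) \leqslant k$, giving $u_i \in \cU_k$. Symmetrically, continuing along the cycle from $x$ to $y_j$ through state edges lands in some $\cC_r$ with $r \geqslant k$, so $y_j \in \cY_k$. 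Therefore $(y_j,u_i) \in A_k$, and $\cS^\* \cap A_k \neq \emptyset$.

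The only subtle step is verifying the monotonicity claim: any directed path in $\D(\bA)$ that enters $\cC_q$ and later reaches $\cC_{q'}$ must have $q \leqslant q'$. This is immediate from Assumption~\ref{asm:line}: the condensation is a directed path $\cC_1 \to \cC_2 \to \cdots \to \cC_\ell$, so inter-SCC edges in $\D(\bA)$ only go from lower to higher indices, while intra-SCC edges keep the index fixed. I would state this as a one-line observation before applying it in both directions of the cycle argument.

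The main obstacle, if any, is handling the input and output vertices in the closed-loop digraph cleanly: the cycle passes through $u_i$ and $y_j$ which are not state vertices, and the DAG of SCCs is defined only for $\D(\bA)$. I would deal with this by splitting the cycle at $u_i$ and $y_j$, observing that the edge from $u_i$ necessarily enters some state vertex (by definition of $U_{q}$'s) and the edge into $y_j$ necessarily leaves some state vertex (by definition of $Y_q$'s), and then invoking the line-graph monotonicity on the purely state-vertex sub-paths. With that in place, the inclusion $(y_j,u_i) \in A_k$ follows, completing the proof.
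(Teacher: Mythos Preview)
Your approach has a genuine gap. You claim that an \emph{arbitrary} feedback edge $(y_j,u_i)\in\cS^\*$ lying in the same closed-loop SCC as $x\in\cC_k$ must belong to $A_k$, and you justify this by asserting that the cycle through $x$ and $(y_j,u_i)$ runs from $u_i$ to $x$ and from $x$ to $y_j$ \emph{through state edges only}. That last assertion is unwarranted: the cycle may use several feedback edges from $\cS^\*$, so the arc from $u_i$ to $x$ can leave and re-enter the state vertices via other $(y,u)$ edges, and the SCC-index monotonicity then says nothing about $u_i$. Concretely, take $\ell=4$ with $u_1$ feeding $\cC_2$, $u_2$ feeding $\cC_1$, $y_1$ sensing $\cC_3$, $y_2$ sensing $\cC_4$, and costs making $\cS^\*=\{(y_1,u_2),(y_2,u_1)\}$ optimal. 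Everything lies in one closed-loop SCC, yet $(y_2,u_1)\notin A_1$ because $u_1\notin\cU_1=\{u_2\}$. So picking ``a feedback edge in the same SCC'' is not enough.

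The fix is to look at the cycle through $x$ and list its feedback edges in order, $(y_{j_1},u_{i_1}),\ldots,(y_{j_s},u_{i_s})$, with $x$ sitting on the purely-state segment from $u_{i_s}$ to $y_{j_1}$. Your monotonicity gives $u_{i_s}\in\cU_k$ and $y_{j_1}\in\cY_k$ immediately. Now walk forward: if $u_{i_1}\in\cU_k$ you are done with $t=1$; otherwise the first state after $u_{i_1}$ has index $>k$, hence the state just before $y_{j_2}$ has index $>k$ and $y_{j_2}\in\cY_k$; repeat. Since $u_{i_s}\in\cU_k$, this terminates at some $t$ with $(y_{j_t},u_{i_t})\in A_k$. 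This extra sweep over the feedback edges on the cycle is exactly the missing ingredient; with it your line-graph monotonicity argument goes through. (The paper's own proof simply asserts that $A_k$ comprises all edges that ``cover'' $\cC_k$ and concludes by contraposition; your route is more explicit once patched.)
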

\begin{proof}
Given $\bK^\*$ is an optimal solution to Problem~\ref{prob:one} and $\cS^\*$ is the corresponding set of minimum cost feedback edges. Thus edges in $\cS^\*$ cover SCCs $\cC_1, \ldots, \cC_\ell$. The sets $\cU_k$ and $\cY_k$ in Algorithm~\ref{alg:dynamic} are constructed in such a way that the set $A_k$, given by $A_k = \{(y_j, u_i):y_j \in \cY_k \mbox{~and~} u_i \in \cU_k \}$, consists of all possible feedback edges that can cover SCC $\cC_k$. Suppose $\cS^\* \cap A_k = \phi$.  Then, the edges in $\cS^\*$ do not cover $\cC_k$. Thus $\bK^\*$ does not satisfy condition~a) in Proposition~\ref{prop:SFM}. Hence, $\bK^\* \notin \K_s$. This contradicts the assumption that $\bK^\*$ is an optimal solution to Problem~\ref{prob:one}. Thus for all $k$, $\cS^\* \cap A_k \neq \phi$.
\end{proof}

 Now for $(y_j, u_i) \in A_k$, $t_k(i)$ is defined as the lowest index $q$ such that $u_i \in \cU_q$ (see Step~\ref{step:minU_k}). Thus $t_k(i) \leqslant k$. The significance of $t_k(i)$ is that edge $(y_j, u_i) \in A_k$ not only covers SCC $\cC_k$, but also cover all the SCCs $\cC_{t_k(i)}, \ldots, \cC_k$. Thus if $(y_j, u_i)$ is present in the set of edges that cover $\cC_1, \ldots, \cC_k$, then the rest of the edges need to cover only $\cC_1, \ldots, \cC_{t_k(i) - 1}$. Now $W([k])$ given in Step~\ref{step:step_k+1} of the algorithm denotes the minimum cost for covering $\cC_1,\ldots, \cC_k$ and $\cS_k$ denotes the corresponding feedback edges (see Step~\ref{step:edges_selected}). The dynamic programming step of the algorithm proceeds as follows.
 
For $k=1$, we start at SCC $\cC_1$. To cover $\cC_1$, we will pick an edge in $A_1$ that is of the least cost. Thus $\cS_1$ consists of a single edge which is from $A_1$. Now we cover $\cC_1$, $\cC_2$ together. Thus an edge in $A_2$ will be present. This edge will connect an output $y_j \in \cY_2$ to an input $u_i$ in $\cU_2$. Suppose $u_i \in U_2$ and $u_i \notin U_1$. Then edge $(y_j, u_i)$ covers only $\cC_2$ and not $\cC_1$. Thus the optimal cost to cover $\cC_1, \cC_2$ is $P_{ij}+ W([1])$. Else if $u_i \in U_2$, then SCCs $\cC_1, \cC_2$ are covered. Then the optimal cost to cover $\cC_1, \cC_2$ is $P_{ij}$. Finally, the minimum cost to cover $\cC_1, \cC_2$ is obtained by finding minimum over all edges in $A_2$. A generic dynamic programming equation is given in Step~\ref{step:step_k+1} of Algorithm~\ref{alg:dynamic}. $\cS_k$ keeps track of the edges required to cover $\cC_1, \ldots, \cC_k$ with the minimum cost. Every stage of the dynamic programming algorithm is updated using Steps~\ref{step:minU_k}~and~\ref{step:step_k+1}. Now the optimal solution to Problem~\ref{prob:one} is obtained using the edges present in $\cS_\ell$ as shown in Step~\ref{step:feedback}. For showing the optimality of Algorithm~\ref{alg:dynamic}, now we formally prove Theorem~\ref{th:twostage}~(i).

\noindent{\it Proof~of~Theorem~\ref{th:twostage}~(i):}
 We prove~(i) using an induction argument. The induction hypothesis is that $W([k])$ is the minimum cost to cover SCCs $\cC_1, \ldots, \cC_k$ and $\cS_k$ is the corresponding optimal set of feedback edges. 

Base step:~We consider $k=1$ as the base step. For $k=1$, $\cU_1 = U_1$. Thus $t_k(i) = 1$. Hence, $W([1]) = \mbox{min}_{(y_j, u_i) \in A_1}\{P_{ij}\}$. Note that here $A_1$ consists of all possible edges that can result in making all state nodes in $\cC_1$ lie in an SCC with a feedback edge. In other words all possible feedback edges that can cover $\cC_1$. Thus the algorithm selects an optimal edge in $A_1$ such that all state nodes in $\cC_1$ lie in an SCC with that feedback edge. Suppose $(y_j, u_i)$ is chosen. Then clearly $u_i \in U_1$ and $y_j \in Y_q$ for some $q \geqslant 1$. Thus condition~a) is satisfied for all states in $\cC_1$ optimally. This proves the base step.

Induction step:~For the induction step we assume that the optimal cost to cover SCCs $\cC_1,\ldots,\cC_{k-1}$ are $W([1]), \ldots, W([k-1])$ respectively. Also, the corresponding edge sets are $\cS_1, \ldots, \cS_{k-1}$ respectively.

 Now we will prove that $W([k])$ is the minimum cost to cover $\cC_1,\ldots, \cC_k$ and $\cS_k$ is the corresponding feedback edge set. Note that $A_k$ consists of all feedback edges that can cover $\cC_k$. Thus an edge in $A_k$ has to be used for covering $\cC_k$. Let $(y_j, u_i) \in A_k$. Note that $(y_j, u_i)$ not only covers $\cC_k$ but also cover $\cC_{t_k(i)}, \ldots, \cC_k$.  Thus the optimal cost to cover $\cC_1, \ldots, \cC_k$ using $(y_j, u_i)$ is $P_{ij} + W([t_k(i) - 1])$. Notice that $W([k])$ is found after performing a minimization over all edges in $A_k$. Since $t_k(i) \leqslant k$ and we assumed that the induction hypothesis is true for $\cC_1, \ldots, \cC_{k-1}$, $W([k])$ is the minimum cost for covering $\cC_1, \ldots, \cC_k$. Further, $\cS_k$ is the union of that edge $(y_j, u_i) \in A_k$ that is selected in the minimization step and $\cS_{t_k(i) - 1}$. Thus $\cS_k$ is the corresponding set of edges of $W([k])$. This completes the proof of~(i).
\qed

\remove{
Now we have the following result.
\begin{lem}\label{lem:dedicated}
Consider a structured system $(\bA, \bB, \bC)$ and a feedback cost matrix $P$. Let $\B(\bA)$ has a perfect matching and Assumption~\ref{asm:line} holds. Also, let $\bK^\*$ be an optimal solution to Problem~\ref{prob:one} and $ \cC_1,\ldots, \cC_\ell$ denote the SCCs in $\D(\bA)$. Now, if $ \bK^\*_{ij} = \*$, then it is enough to consider $u_i$ connecting to the SCC of the least index and $y_j$ connecting to the SCC of the largest index.
\end{lem}
\begin{proof}
Given $\bK^\*_{ij} = \*$. Let $\cC_u$ denote the set of all SCCs in $\D(\bA)$ that has some state connected to the input $u_i$. That is $\cC_u = \{\cC_q: x_r \in \cC_q \mbox{ and } (u_i, x_r) \in E_U\}$. Similarly, let $\cC_y$ denote the set of all SCCs in $\D(\bA)$ that has some state connected to the output $y_j$. That is $\cC_y = \{\cC_k: x_r \in \cC_k \mbox{ and } (x_r, y_j) \in E_Y\}$. Let $\cC_u = \{\cC_{i_1},\ldots,\cC_{i_q}\}$ and $\cC_y = \{\cC_{j_1},\ldots,\cC_{j_k}\}$. Note that, $\bK^\*_{ij} = \*$ results in all the SCCs from $\cC_{i_1}$ to $\cC_{j_q}$ becoming a single SCC with edge $(y_j, u_i)$. Thus if $\bK^\*_{ij} = \*$, then it is enough to consider $u_i$ connecting to the SCC of the least index and $y_j$ connecting to the SCC of the largest index.
\end{proof}
Lemma~\ref{lem:dedicated} is a consequence of the special structure of the system that is assumed. Inspecting Lemma~\ref{lem:down} we make the following remark.
\begin{rem}\label{rem:dedicated}
Consider a structured system $(\bA, \bB, \bC)$ such that $\B(\bA)$ has a perfect matching and Assumption~\ref{asm:line} holds. Our aim is to solve Problem~\ref{prob:one} on this system. Then, without loss of generality we can assume that the inputs and outputs of the structured system $(\bA, \bB, \bC)$ are dedicated. That is, $\bB$ is such that each input can actuate a single state only and $\bC$ is such that each output can sense a single only.
\end{rem}
The above remark holds because of the given topology of the system and the presence of a perfect matching in $\B(\bA)$. Solving Problem~\ref{prob:one} on a structured system with a general input-output structure $\bB, \bC$  is same as solving it for a dedicated input-output structure. Though it is the same for the given topology, that does not mean that this can be assumed in general topology and when the bipartite graph $\B(\bA)$ has no perfect matching.

The following lemma gives a characterization of the solution $\bK^\*$ given by Algorithm~\ref{alg:dynamic}.
\begin{lem}\label{lem:down}
Consider a structured system $(\bA, \bB, \bC)$ and feedback cost matrix $P$ such that $\B(\bA)$ has a perfect matching and Assumption~\ref{asm:line} holds. Let $\bK^\*$ be an optimal solution to Problem~\ref{prob:one}, where $\bK^\*_{ij} = \*$, $u_i \in U_q$ and $y_j \in Y_k$. Then $k \geqslant q$. 
\end{lem}
\begin{proof}
Since $\B(\bA)$ has a perfect matching, all state nodes lie in a single cycle that consists of only $x_i$'s. Thus condition~b) in Proposition~\ref{prop:SFM} is satisfied. Thus any feedback edge present in $\bK^\*$ is included for achieving condition~a). By Assumption~\ref{asm:line} the DAG of all the SCCs in $\D(\bA)$ is a line graph. Note that, by Remark~\ref{rem:dedicated}, with out loss of generality we can consider inputs and outputs are dedicated here. Hence, connecting output $y_j \in Y_k$ to input $u_i \in U_q$ for $k < q$ does not create any cycle. As a result, such a connection does not result in any $x_i$ satisfying condition~a). Hence, $\bK^\*_{ij} = \*$ implies $k \geqslant q$. 
\end{proof}
}
Now we consider a class of structured systems where only Assumption~\ref{asm:line} holds, i.e., the bipartite graph $\B(\bA)$ does not have a perfect matching. Since $\B(\bA)$ does not have a perfect matching, condition~b) in Proposition~\ref{prop:SFM} also has to be satisfied using the feedback connections. In this case, we propose a two stage algorithm. The proposed algorithm uses the dynamic programming algorithm explained above and a minimum cost perfect matching algorithm \cite{CorLeiRivSte:01}. The dynamic programming algorithm gives solution $\bK^a$ that satisfies condition~a). The minimum cost perfect matching algorithm gives a solution $\bK^b$ that satisfies condition~b). We prove that combining these together we get a 2-optimal solution to Problem~\ref{prob:one}.

\begin{algorithm}[t]
  \caption{Pseudo-code for solving Problem~\ref{prob:one} on structured systems where Assumption~\ref{asm:line} holds
  \label{alg:twotage}}
  \begin{algorithmic}
\State \textit {\bf Input:} Structured system $(\bA, \bB, \bC)$ and feedback cost matrix $P$
\State \textit{\bf Output:} Feedback matrix $\bK^A$
\end{algorithmic}
  \begin{algorithmic}[1]
    \State Find feedback matrix satisfying condition~a) using Algorithm~\ref{alg:dynamic}, say $\bK^a$ \label{step:soln1}
    \State Construct the bipartite graph $\B(\bA, \bB, \bC, \bK)$
  \State For $e \in \E_X \cup \E_U \cup \E_Y \cup \E_K \cup \E_{\mathbb{U}} \cup \E_{\mathbb{Y}}$ define:
  \State Cost, $ c(e) \leftarrow 
\begin{cases}
P_{ij}, {~\rm for~} e = (u'_i,y_j) \in \E_K,\\
0, ~~   {\rm otherwise}.
\end{cases}$ \label{step:bip_cost}
   \State  Find minimum cost perfect matching of $\B(\bA, \bB, \bC, \bK)$ under cost $c$, say $M^\*$
\State Find feedback matrix satisfying condition~b) optimally using $M^\*$, say $\bK^b$
\State $\bK^b \leftarrow \{{\bK^b}_{ij} = \*: (u'_i, y_j) \in M^\* \}$ 
\label{step:soln2}
\State $\bK^A \leftarrow \{{\bK^A}_{ij} = \* \mbox{ if either }{\bK^a}_{ij} = \* \mbox{ or } {\bK^b}_{ij} \ = \*\}$ \label{step:final}
\end{algorithmic}
\end{algorithm}
The pseudo-code for solving Problem~\ref{prob:one} on a structured system where only Assumption~\ref{asm:line} holds is presented in Algorithm~\ref{alg:twotage}. Firstly, an optimal set of feedback edges that satisfy condition~a) in Proposition~\ref{prop:SFM} is obtained using the dynamic programming algorithm given in Algorithm~\ref{alg:dynamic}. Let $\bK^a$ denote the feedback matrix obtained as solution to the dynamic programming algorithm (see Step~\ref{step:soln1}). Note that this feedback matrix does not guarantee condition~b). To satisfy condition~b) we run a minimum cost perfect matching algorithm on the bipartite graph $\B(\bA, \bB, \bC, \bK)$ with cost function defined as shown in Step~\ref{step:bip_cost}. Let $M^\*$ be an optimal matching obtained and $\bK^b$ is the feedback matrix selected under $M^\*$ (see Step~\ref{step:soln2}). From Proposition~\ref{prop:match}, $\bK^b$ satisfies condition~b) in Proposition~\ref{prop:SFM}. Note that feedback matrix $\bK^A$ obtained by taking element wise union of $\bK^a$ and $\bK^b$ (see Step~\ref{step:final}) satisfies both the conditions in Proposition~\ref{prop:SFM} and hence is a feasible solution to Problem~\ref{prob:one}. We have the following lemma.
\begin{lem}\label{lem:match_cost}
Let $M$ be a perfect matching in $\B(\bA, \bB, \bC, \bK)$ and $\bK(M):= \{\bK(M)_{ij} = \*: (u'_i, y_j) \in M \}$  be the feedback matrix selected under $M$. Then, $c(M) = P(\bK(M))$.
\end{lem}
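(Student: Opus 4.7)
The plan is to prove this lemma by direct computation, unpacking both sides from their definitions and matching up the contributions edge by edge. The statement is essentially a bookkeeping identity that shows the cost function $c$ on the bipartite graph has been set up so that perfect matchings and feedback matrices correspond in a cost-preserving way.

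First I would expand $c(M) = \sum_{e \in M} c(e)$ using the definition in Step~\ref{step:bip_cost}. Since $c(e) = 0$ whenever $e$ belongs to $\E_X \cup \E_U \cup \E_Y \cup \E_{\mathbb{U}} \cup \E_{\mathbb{Y}}$, the only contributions to the sum come from edges in $M \cap \E_K$. This reduces the expression to $c(M) = \sum_{(u'_i, y_j) \in M \cap \E_K} P_{ij}$. The key observation here is that an edge of the form $(u'_i, y_j)$ in $\B(\bA, \bB, \bC, \bK)$ is, by construction, exactly an $\E_K$-edge, so the intersection $M \cap \E_K$ coincides with the set of pairs $(u'_i, y_j) \in M$.

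Next I would expand the right-hand side using the definition of $P(\bK)$ given in the introduction, namely $P(\bK(M)) = \sum_{(i,j) : \bK(M)_{ij} = \*} P_{ij}$. By the definition of $\bK(M)$, the index pair $(i,j)$ satisfies $\bK(M)_{ij} = \*$ if and only if $(u'_i, y_j) \in M$. Substituting this indexing yields $P(\bK(M)) = \sum_{(u'_i, y_j) \in M} P_{ij}$, which coincides with the expression derived for $c(M)$.

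The argument is essentially a direct verification, and I would not expect a genuine obstacle; the only point requiring a little care is noting that every edge of the form $(u'_i, y_j)$ appearing in a matching is automatically an element of $\E_K$, so that zero-cost edges in $M$ correspond precisely to the matching edges involving the other vertex classes and contribute nothing on either side.
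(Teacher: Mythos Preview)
Your proposal is correct and is essentially the same approach as the paper's, which simply states that the result is an immediate consequence of the cost definition in Step~\ref{step:bip_cost} and the feedback-matrix definition in Step~\ref{step:soln2} of Algorithm~\ref{alg:twotage}. You have just unpacked that one-line justification into the explicit term-by-term computation, which is fine and arguably clearer.
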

\begin{proof}
The proof is an immediate consequence of Steps~\ref{step:bip_cost}~and~\ref{step:soln2} in Algorithm~\ref{alg:twotage}.
\end{proof}
 
Now we give the proof of Theorem~\ref{th:twostage}.

\noindent{\it Proof~of~Theorem~\ref{th:twostage}:}
Part~(i) of Theorem~\ref{th:twostage} is given before. Hence we proceed with the proof of part~(ii). For proving~(ii), let $\bK^\*$ be an optimal solution to Problem~\ref{prob:one} with cost $p^\*$. Thus $\bK^\*$ satisfies both the conditions in Proposition~\ref{prop:SFM}. Thus the optimal cost for satisfying each condition individually is at most $p^\*$. Thus
\begin{eqnarray*}
p^\* & \geqslant & P(\bK^a), \\
p^\* & \geqslant & P(\bK^b),  \\
2\,p^\* & \geqslant & P(\bK^a) + P(\bK^b). 
\end{eqnarray*} 
Thus $P(\bK^A) \leqslant 2\,p^\*$.
\qed

The following theorem gives complexities of the two algorithms proposed in this paper for solving Problem~\ref{prob:one}.

\begin{theorem}\label{th:comp1}
Consider a structured system $(\bA, \bB, \bC)$ with $n$ number of states and cost matrix $P$. Then, both Algorithm~\ref{alg:dynamic} and Algorithm~\ref{alg:twotage} has complexity $O(n^3)$.
\end{theorem}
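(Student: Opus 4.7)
The plan is to bound the complexity of each step in Algorithms~\ref{alg:dynamic}~and~\ref{alg:twotage} separately and then add the contributions, noting that the number of inputs $m$, the number of outputs $p$, and the number of SCCs $\ell$ are all $O(n)$.

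For Algorithm~\ref{alg:dynamic}, I would first argue that the SCCs of $\D(\bA)$ together with the ordering $\{\cC_1,\ldots,\cC_\ell\}$ can be identified in $O(n^2)$ time using standard SCC algorithms (e.g., Tarjan's on the adjacency matrix). Computing the sets $U_k, Y_k$ in Steps~\ref{step:input}~and~\ref{step:output} for every $k$ amounts to a single scan of the edges of $\bB$ and $\bC$, which is $O(n^2)$ overall, and the prefix/suffix unions $\cU_k, \cY_k$ in Steps~\ref{step:IPset1}~and~\ref{step:IPset2} are computed incrementally in an additional $O(n^2)$ time. I would also precompute, for every input index $i$, the value $\min\{q : u_i \in U_q\}$ in $O(n^2)$; then each $t_k(i)$ in Step~\ref{step:minU_k} is obtained in constant time. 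The main loop iterates $\ell = O(n)$ times, and in iteration $k$ both the edge set $A_k$ (of size at most $mp = O(n^2)$) and the minimization in Step~\ref{step:step_k+1} can be handled in $O(n^2)$ time. Therefore the loop contributes $O(n)\cdot O(n^2) = O(n^3)$, which dominates and yields the claimed $O(n^3)$ complexity for Algorithm~\ref{alg:dynamic}.

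For Algorithm~\ref{alg:twotage}, I would observe that Step~\ref{step:soln1} is a single invocation of Algorithm~\ref{alg:dynamic} and so costs $O(n^3)$ by the above. Constructing $\B(\bA,\bB,\bC,\bK)$ and assigning the costs in Step~\ref{step:bip_cost} is $O(n^2)$, since the bipartite graph has $O(n)$ vertices on each side and $O(n^2)$ edges. For the minimum cost perfect matching I would invoke the Hungarian algorithm \cite{CorLeiRivSte:01}, which runs in $O(n^3)$ on a bipartite graph with $O(n)$ vertices per part. The post-processing Steps~\ref{step:soln2}~and~\ref{step:final} require only reading off the matching and taking an element-wise union of two $m \times p$ matrices, which is $O(n^2)$. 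Summing the contributions $O(n^3) + O(n^2) + O(n^3) + O(n^2) = O(n^3)$ gives the complexity of Algorithm~\ref{alg:twotage}.

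The only real obstacle is the bookkeeping: one has to check that every set constructed inside the main loop of Algorithm~\ref{alg:dynamic} can indeed be maintained within $O(n^2)$ time per iteration, so that precomputing the maps $i \mapsto t_k(i)$ avoids a hidden factor. Once that precomputation is in place, the rest is a direct accounting argument and no further algorithmic subtlety is needed.
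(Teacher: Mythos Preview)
Your argument is correct and follows essentially the same accounting as the paper: bound the SCC computation, observe each of the $\ell=O(n)$ dynamic-programming iterations costs $O(mp)=O(n^2)$, and then add the matching cost for Algorithm~\ref{alg:twotage}. The only cosmetic difference is that the paper quotes $O(n^{2.5})$ for the minimum cost perfect matching step whereas you invoke the $O(n^3)$ Hungarian bound; either way the total remains $O(n^3)$, and your more careful precomputation of $t_k(i)$ simply makes explicit what the paper leaves implicit.
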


\begin{proof}
Finding the SCCs in $\D(\bA)$ has $O(n^2)$ complexity. Let $m, p$ denote the number of inputs and outputs in the structured system. Then each stage of Algorithm~\ref{alg:dynamic} has to compute at most $mp$ number of values and find the least value amongst them. Note that $m = O(n)$ and $p = O(n)$. Thus each stage of the algorithm is of complexity $O(n^2)$. The maximum number of iterations required is the number of SCCs in $\D(\bA)$ which is at most $n$. Thus complexity of Algorithm~\ref{alg:dynamic} is $O(n^3)$.

Algorithm~\ref{alg:dynamic} has complexity $O(n^3)$ and the minimum cost perfect matching algorithm has complexity $O(n^{2.5})$. Combining both, Algorithm~\ref{alg:twotage} has complexity $O(n^3)$ and this completes the proof.
\end{proof}

\begin{rem}\label{rem:spanning_tree}
In the DAG of SCCs of $\D(\bA)$, if there exists a spanning tree that is a line graph, then all the analysis and results discussed in this paper still holds. Figure~\ref{fig:spanning} shows a schematic diagram of DAG of SCCs of $\D(\bA)$ of such a system. In such a case, one needs to look at only that particular spanning tree for solving Problem~\ref{prob:one}. This gives a generalization of the structured systems that are studied in this paper.  
\end{rem}

\begin{figure}
\begin{center}
\begin{tikzpicture}[->,>=stealth',shorten >=0.5pt,auto,node distance=1.5cm,
                thick,main node/.style={circle,draw,font=\small\bfseries}]

  \node[main node] (1) {$\cC_1$};
  \node[main node] (2) [right of=1] {$\cC_2$};
  \node[main node] (3) [right of=2] {$\cC_{3}$};
  \node[main node] (4) [right of=3] {$\cC_{4}$};
  \node[main node] (5) [right of=4] {$\cC_{5}$};
  \node[main node] (6) [right of=5] {$\cC_{6}$};

 \draw[] (1) -> (2);
 \draw[] (2) -> (3);
 \draw[] (3) -> (4);
 \draw[] (4) -> (5);
 \draw[] (5) -> (6); 
\path[every node/.style={font=\sffamily\small}]
(1) edge[bend left = 40] node [left] {} (6)
(2) edge[bend left = 40] node [left] {} (4) 
(3) edge[bend left = 40] node [left] {} (5)
(2) edge[bend right = 40] node [left] {} (5)
(4) edge[bend right = 40] node [left] {} (6)
(1) edge[bend right = 40] node [left] {} (4);  
 \end{tikzpicture}
\caption{The line graph DAG corresponding to $\D(\bA)$}
\label{fig:spanning}
\end{center}
\end{figure}
In the next section we explain the dynamic programming algorithm proposed in the paper using an illustrative example.
\section{Illustrative Example}\label{sec:eg}
In this section we describe the proposed dynamic algorithm using an example. Figure~\ref{fig:eg1} denote the digraph of a structured system whose cost matrix is given by
\begin{equation*}
P = 
\begin{bmatrix}
2 & 10 & 100 \\
7 & 8 & 5 \\
9 & 5 & 50  \\
10 & 11 & 13 \\
\end{bmatrix}.
\end{equation*}

There are four SCCs: $\cC_1 = \{x_1, x_2, x_3\}$, $\cC_2 = \{x_4, x_5\}$, $\cC_3 = \{x_6\}$ and $\cC_4 = \{x_7, x_8, x_9, x_{10}\}$. Also, $\B(\bA)$ has a perfect matching. Here $U_1 = \{u_1, u_2\}$, $U_2 = \{u_2 \}$, $U_3 = \{u_3\}$,  $U_4 = \{u_3, u_4\}$. Similarly, $Y_1 = \{y_1\}$, $Y_2 = \phi$, $Y_3 = \{y_2\}$ and $Y_4 = \{y_3\}$. Subsequently, $\cU_1 = \{u_1, u_2\}$, $\cU_2 = \{u_1, u_2\}$, $\cU_3 = \{u_1, u_2, u_3\}$ and $\cU_4 = \{u_1, u_2, u_3, u_4\}$, $\cY_1 = \{y_1, y_2, y_3 \}$, $\cY_2 = \{ y_2, y_3 \}$, $\cY_3 = \{ y_2, y_3 \}$ and $\cY_4 = \{y_3 \}$. 
\begin{figure}
\begin{center}
\begin{tikzpicture}[->,>=stealth',shorten >=1pt,auto,node distance=1.85cm, main node/.style={circle,draw,font=\scriptsize\bfseries}]
\definecolor{myblue}{RGB}{80,80,160}
\definecolor{almond}{rgb}{0.94, 0.87, 0.8}
\definecolor{bubblegum}{rgb}{0.99, 0.76, 0.8}
\definecolor{columbiablue}{rgb}{0.61, 0.87, 1.0}

  \fill[bubblegum] (-1,1) circle (7.0 pt);
  \fill[bubblegum] (1.5,-1) circle (7.0 pt);
  \fill[bubblegum] (4,-1) circle (7.0 pt);
  \fill[bubblegum] (5,1.5) circle (7.0 pt);
  \fill[columbiablue] (7,-1.0) circle (7.0 pt);
  \fill[columbiablue] (0,-1.0) circle (7.0 pt);
  \fill[columbiablue] (3.75,1.0) circle (7.0 pt);
  
  \fill[almond] (-1,0) circle (7.0 pt);
  \fill[almond] (0,0) circle (7.0 pt);
  \fill[almond] (1.0,0) circle (7.0 pt);
  \fill[almond] (2.0,0) circle (7.0 pt);
  \fill[almond] (3.0,0) circle (7.0 pt);
  \fill[almond] (4.0,0) circle (7.0 pt);
  \fill[almond] (5.0,0) circle (7.0 pt);
  \fill[almond] (6.0,0) circle (7.0 pt);
  \fill[almond] (7.0,0) circle (7.0 pt);
  \fill[almond] (6.0,1.5) circle (7.0 pt);
  \fill[almond] (6.0,-1.5) circle (7.0 pt);

   \node at (-1,1) {\small $u_1$};
   \node at (1.5,-1) {\small $u_2$};
   \node at (4,-1) {\small $u_3$};
   \node at (5,1.5) {\small $u_4$};
   
   \node at (0,-1.0) {\small $y_1$};
   \node at (3.75,1.0) {\small $y_2$};
   \node at (7,-1.0) {\small $y_3$};   
      
  \node at (-1,0) {\small $x_1$};
  \node at (0,0) {\small $x_2$};
  \node at (1,0) {\small $x_3$};
  \node at (2,0) {\small $x_4$};
  \node at (3,0) {\small $x_5$};
  \node at (4,0) {\small $x_6$};
  \node at (5,0) {\small $x_7$};
  \node at (6.0,0) {\small $x_8$};
  \node at (7,0) {\small $x_9$};
  \node at (6,1.5) {\small $x_{10}$};
  \node at (6,-1.5) {\small $x_{11}$};

  \draw (-1,0.75)  ->   (-1,0.25);
  \draw (1.5,-0.75)  ->   (1,-0.25);
  \draw (1.5,-0.75)  ->   (2,-0.25);
  \draw (4,-0.75)  ->   (4,-0.25);
  \draw (4,-0.75)  ->   (5,-0.25);
  \draw (5.25,1.5)  ->   (5.75,1.5);
  
  \draw (1.25,0)  ->   (1.75,0);
  \draw (3.25,0)  ->   (3.75,0);
  \draw (4.25,0)  ->   (4.75,0);
  
  \draw (0,-0.25) ->   (0,-0.75);
  \draw (7,-0.25) ->   (7,-0.75);
  \draw (4,0.25) ->   (3.75,0.8);

\path[every node/.style={font=\sffamily\small}]
(-1,0.25) edge[bend left = 40] node [left] {} (0,0.25)
(0,-0.25) edge[bend left = 40] node [left] {} (-1,-0.25)
(0,0.25) edge[bend left = 40] node [left] {} (1,0.25)
(1,-0.25) edge[bend left = 40] node [left] {} (0,-0.25)
(2,0.25) edge[bend left = 40] node [left] {} (3,0.25)
(3,-0.25) edge[bend left = 40] node [left] {} (2,-0.25)
(1,0.25) edge[loop above] (1.5,0.25)
(4.0,0.25)edge[loop above] (4.75,0.25)
(7,0.25) edge[loop above]  (7.5,0.25)
(6,1.75) edge[loop above]  (6.5,1.75)
(6,-1.75) edge[loop below] (6.5,-1.25)
(5,0.25) edge[bend left = 40] node [left] {} (6,0.25)
(6,-0.25) edge[bend left = 40] node [left] {} (5,-0.25)
(6,0.25) edge[bend left = 40] node [left] {} (7,0.25)
(7,-0.25) edge[bend left = 40] node [left] {} (6,-0.25)
(6,0.25) edge[bend left = 40] node [left] {} (5.75,1.5)
(6.25,1.5) edge[bend left = 40] node [left] {} (6,0.25)

(6,-0.25) edge[bend left = 40] node [left] {} (6.25,-1.5)
(5.75,-1.5) edge[bend left = 40] node [left] {} (6,-0.25);
\end{tikzpicture}
\caption{Digraph $\D(\bA, \bB, \bC)$ of the structured system considered in the example illustrated in Section~\ref{sec:eg}}
\label{fig:eg1}
\end{center}
\end{figure} 

In stage~1, $W([1]) = {\rm min}\{2,10,100,7,8,5 \} = 2$ and $\cS_1 = (y_1,u_1)$. In stage~2, $W([2]) = {\rm min}\{10+0, 100+0, 8+0, 5+0 \} = 5$ and $\cS_2 = (y_3,u_2)$. In stage~3, $W([3]) = {\rm min}\{10+0, 100+0, 8+0, 5+0, 5+5, 50+5 \} = 5$ and $\cS_3 = (y_2,u_1)$.  In stage~4, $W([4]) = {\rm min}\{100+0, 5+0, 50+5, 13+5 \} = 5$ and $\cS_4 = (y_3,u_2)$. Thus connecting $(y_3, u_2)$ is an optimal connection in this example. Thus $F^\*_a = 
\begin{bmatrix}
0 & 0 & 0 \\
0 & 0 & \* \\
0 & 0 & 0  \\
0 & 0 & 0 \\
\end{bmatrix}$ is an optimal solution to Problem~\ref{prob:one}.
\section{Conclusion and Future Work}\label{sec:conclu}
This paper deals with feedback selection of structured systems for arbitrary pole placement when each feedback edge is associated with a cost. Our aim is to optimally select minimum cost feedback matrix such that arbitrary pole placement is possible. This problem cannot be solved in polynomial time unless P=NP. In this paper we give a reduction of a well studied NP-hard problem, the weighted set cover problem to an instance of Problem~\ref{prob:one}. We also show that Problem~\ref{prob:one} cannot be approximated to factor $b\,{\rm log\,}n$, where $n$ denotes the number of states in the system and $0 < b < \frac{1}{4}$. Due to the NP-hardness of the problem we considered a special class of systems, where the directed acyclic graph of SCCs of $\D(\bA)$ is a line graph and $\B(\bA)$ has a perfect matching. We gave a polynomial time optimal algorithm based on dynamic programming for solving Problem~\ref{prob:one} on this class of systems. Further, we studied another special class of systems after relaxing the perfect matching assumption, and gave a 2-optimal polynomial time algorithm for solving Problem~\ref{prob:one} on these class of systems. Finding a good approximation algorithm for a general system still remains an open problem and is a part of future research.
\bibliographystyle{myIEEEtran}  
\bibliography{Line_Graph} 
\remove{\begin{IEEEbiography}[\vspace{0mm}
{\includegraphics[width=1in,height=1in,clip,keepaspectratio]{Shana.jpg}}\vspace{0mm}]
{Shana Moothedath} obtained her B.Tech. and M.Tech. in Electrical and Electronics Engineering from Kerala
University, India in 2011 and 2014 respectively. Currently she is pursuing Ph.D. in the
Department of Electrical Engineering, Indian Institute of Technology Bombay. Her research interests include matching or allocation problem, structural analysis of control systems, combinatorial optimization and applications of graph theory.
\end{IEEEbiography}
\begin{IEEEbiography}[{\includegraphics[width=0.9in,height=0.9in,clip,keepaspectratio]{chaporkar.jpg}}]
{Prasanna Chaporkar} received his M.S. in Faculty of Engineering from Indian Institute of Science, Bangalore, India in 2000, and Ph.D. from University of Pennsylvania, Philadelphia, PA in 2006. He was an ERCIM post-doctoral fellow at ENS, Paris, France and NTNU, Trondheim, Norway. Currently, he is an Associate Professor at Indian Institute of Technology Bombay. His research interests are in resource allocation, stochastic control, queueing theory, and distributed systems and algorithms.
\end{IEEEbiography} 
\begin{IEEEbiography}[{\includegraphics[width=1in,height=1in,clip,keepaspectratio]{Madhu_Belur.jpg}}\vspace{0mm}]
{ Madhu N. Belur}
is at IIT Bombay since 2003, where he currently is a professor in the
Department of Electrical Engineering. His interests include dissipative
dynamical systems, graph theory and
open-source implementation for various applications.
\end{IEEEbiography} }

\end{document}